\title[Products of residue currents]{Various approaches to products of residue currents}
\author{Richard L\"{a}rk\"{a}ng \& H\aa kan Samuelsson Kalm}
\address{R.\ L\"{a}rk\"{a}ng, H.\ Samuelsson, Department of Mathematical Sciences, Division of Mathematics, 
University of Gothenburg and Chalmers University of Technology, SE-412 96 G\"{o}teborg, Sweden}
\email{larkang@chalmers.se, hasam@chalmers.se}
\date{\today}
\newtheorem{proposition}{Proposition}[]
\newtheorem{theorem}[proposition]{Theorem}
\newtheorem{lemma}[proposition]{Lemma}
\theoremstyle{definition}
\newtheorem{definition}[proposition]{Definition}
\newtheorem{remark}[proposition]{Remark}
\newcommand{\C}{\mathbb{C}}
\newcommand{\debar}{\bar{\partial}}
\newcommand{\D}{\mathscr{D}}
\newcommand{\Q}{\mathbb{Q}}
\newcommand{\CP}{\mathbb{CP}}
\newcommand{\B}{\mathbb{B}}
\def\newop#1{\expandafter\def\csname #1\endcsname{\mathop{\rm #1}\nolimits}}
\begin{document}
\nocite{*}
\bibliographystyle{plain}

\begin{abstract}
We describe various approaches to Coleff-Herrera products of residue currents $R^j$
(of Cauchy-Fantappi\`e-Leray type) associated to holomorphic
mappings $f_j$. More precisely, we study
to which extent (exterior) products of natural regularizations of the individual 
currents $R^j$ yield regularizations of the corresponding Coleff-Herrera products. Our results hold globally on 
an arbitrary pure-dimensional complex space.
\end{abstract}

\maketitle
\thispagestyle{empty}

\section{Introduction}

\bigskip

Let $f$ be a holomorphic function defined on the unit ball $\B \subset \C^n$.
If $f$ is a monomial it is elementary to show, e.g., by integrations by parts or by a Taylor expansion,
that the principal value current
$\varphi \mapsto \lim_{\epsilon \to 0}\int_{|f|^2>\epsilon}\varphi/f$, $\varphi \in \D_{n,n}(\B)$, exists and defines a 
$(0,0)$-current $1/f$ that we also denote by $U^f$. From Hironaka's theorem it then follows that such limits exist 
for general $f$ and also that $\B$ may 
be replaced by a complex space, \cite{HL}. 
The $\debar$-image, $R^f:=\debar (1/f)$, is the residue current of $f$ and by Stokes' theorem it is 
given by $\varphi \mapsto \lim_{\epsilon \to 0}\int_{|f|^2=\epsilon}\varphi/f$, $\varphi \in \D_{n,n-1}(\B)$. 
It has the useful property that its 
annihilator ideal is equal to the principal ideal $\langle f \rangle$ and, moreover, it gives a factorization of Lelong's
integration current; $2\pi i [f=0] = \debar(1/f) \wedge df$. 

There are (at least) two natural ways of regularizing $U^f$ and $R^f$. If $\lambda \in \C$ and 
$\mathfrak{Re}\, \lambda \gg 0$, then $\lambda \mapsto \int \varphi \, |f|^{2\lambda}/f$ is holomorphic for any
test form $\varphi$. It is well known (cf., Lemma~\ref{pmmultlemma}) that the current-valued map
$\lambda \mapsto |f|^{2\lambda}/f=:U^{f, \lambda}$ has a meromorphic extension to $\C$ with poles contained in the set of
negative rational numbers and that the value at $\lambda=0$ is $U^f$. It follows that 
$\lambda \mapsto \debar |f|^{2\lambda}/f=:R^{f,\lambda}$ is meromorphic in $\C$, analytic in a half space containing the origin, 
and that the value at the origin is $R^f$. The technique of using analytic continuation in 
residue current theory has its roots in the work of Atiyah, \cite{At},
and Bernstein-Gel'fand, \cite{BG}. In the context of residue currents 
it has been developed by several authors, e.g., Barlet-Maire, \cite{BaMa}, Yger, \cite{Y},
Berenstein-Gay-Yger,\cite{BGY}, Passare-Tsikh, \cite{PTCanad},
and recently by the second author in \cite{HasamArkiv}.
The second regularization method, inspired by Passare, \cite{PCrelle}, 
is more explicit and concrete; $U^f$ and $R^f$ are obtained as weak limits 
of explicit smooth forms. Let $\chi$ be a smooth regularization of the characteristic function $\mathbf{1}_{[1,\infty)}$
and let $U^{f,\epsilon}:=\chi(|f|^2/\epsilon)/f$ and $R^{f,\epsilon}:= \debar\chi(|f|^2/\epsilon)/f$. Then (see, e.g., 
\cite{PCrelle})  
$U^f=\lim_{\epsilon \to 0^+}U^{f,\epsilon}$ and 
$R^f=\lim_{\epsilon\to 0^+} R^{f,\epsilon}$ in the sense of currents. Notice that the original definition 
mentioned above corresponds to $\chi=\mathbf{1}_{[1,\infty)}$.

If $f$ is a tuple of functions or a section of a vector bundle there are natural analogues of the currents
$1/f$ and $\debar(1/f)$ introduced in \cite{PTY} and \cite{MatsAB}. The construction of these more general
currents, still denoted $U^f$ and $R^f$, 
is based on Bochner-Martinelli and Cauchy-Fantappi\`{e}-Leray type formulas; see Section~\ref{formulering} for details.
In this paper we consider products of regularized currents of this kind and we investigate their limit behavior. It turns out
that both the $\lambda$-approach and the $\epsilon$-approach yield the same current as the classical Coleff-Herrera approach.

\begin{center}
---
\end{center}

\noindent Let $Z$ be a reduced complex space of pure dimension $n$, let $E_1,\ldots,E_p$ be hermitian holomorphic vector 
bundles over $Z$, and let $f_j$ be a holomorphic section of $E_j^*$. Then $U^{f_j}=:U^j$ and $R^{f_j}=:R^j$ become
currents with values in $\Lambda E_j$; 
if $\textrm{rank}\, E_j=1$ then $U^j$ is the principal value current associated with the meromorphic section $1/f_j$ of $E_j$ and 
$R^j=\debar U^j$. In complete analogy with the regularization methods discussed above we have
\begin{equation*}
U^j=U^{j,\lambda}\big{|}_{\lambda=0}=\lim_{\epsilon\to 0^+} U^{j,\epsilon}  \quad \textrm{and} \quad
R^{j}=R^{j,\lambda}\big{|}_{\lambda=0} = \lim_{\epsilon\to 0^+} R^{j,\epsilon},
\end{equation*}
see Section~\ref{formulering}. We define products of the $R^j$ (for simplicity we restrict attention to such products in 
this section) recursively as follows:
Having defined $R^{k-1}\wedge \cdots \wedge R^1$ it turns out (see \cite{AWCrelle} or Section~\ref{formulering}) that 
\begin{equation*}
\lambda \mapsto R^{k,\lambda}\wedge R^{k-1}\wedge \cdots \wedge R^1
\end{equation*}
has an analytic continuation to a neighborhood of $\lambda=0$ and we define 
$R^{k}\wedge \cdots \wedge R^1$ as the value at $\lambda=0$. From the proof of Proposition~5.4 in \cite{ASWY}
it follows that one can compute the product in the following way: If $a_1 >\cdots >a_p>0$ are integers then
\begin{equation*}
R^{p}\wedge \cdots \wedge R^1=
R^{p,\lambda^{a_p}}\wedge \cdots \wedge R^{1,\lambda^{a_1}}\big{|}_{\lambda=0}.
\end{equation*}
That is, the recursive definition can be replaced 
by the evaluation of a one-variable analytic (current valued) 
function at the origin; we just have to make sure that $\lambda^{a_1}$
tends to zero much faster than $\lambda^{a_2}$ and so on.

We now consider the smooth form $R^{p, \epsilon_p}\wedge \cdots\wedge R^{1,\epsilon_1}$ and limits of it 
of the following kind:
\begin{definition}\label{limitdef}
Let $\vartheta$ be a function defined on $(0,\infty)^p$. We let
\begin{equation*}
\lim_{\epsilon_1 \ll \cdots \ll \epsilon_p\to 0}\vartheta(\epsilon_1,\ldots,\epsilon_p)
\end{equation*}
denote the limit (if it exists and is well-defined) 
of $\vartheta$ along any path $\delta\mapsto \epsilon (\delta)$ towards the origin
such that for all $\ell\in \mathbb{N}$ and $j=2,\ldots,p$ there are positive constants $C_{j\ell}$ such that
$\epsilon_{j-1}(\delta) \leq C_{j\ell}\, \epsilon_j^{\ell}(\delta)$.
Here, we extend the domain of definition of $\vartheta$ to points $(0,\dots,0,\epsilon_{m+1},\dots,\epsilon_p)$,
where $\epsilon_{m+1},\dots,\epsilon_p > 0$, by defining
\begin{equation*}
    \vartheta(0,\dots,0,\epsilon_{m+1},\dots,\epsilon_p) = \lim_{\epsilon_m\to 0}\dots\lim_{\epsilon_1\to 0}
    \vartheta(\epsilon_1,\dots,\epsilon_m,\epsilon_{m+1},\dots,\epsilon_p).
\end{equation*}
if the limits exist.
\end{definition}

Recall that $(\epsilon_1,\ldots,\epsilon_p)$ tends to zero along an {\em admissible paths} in the sense of 
Coleff-Herrera, \cite{CH}, if it tends to zero along a path inside $(0,\infty)^p$ such that 
$\epsilon_{j-1}/\epsilon_j^{\ell}\to 0$ for all $\ell \in \mathbb{N}$ and $j=2,\ldots,p$. 
The limits in Definition~\ref{limitdef} are (slightly) more general since, e.g., $\epsilon_1$ is allowed 
to attain the value $0$ before the other $\epsilon_j$ go to zero.
In particular, it thus includes the iterated limit letting $\epsilon_k \to 0$ one at a time.
The following theorem is a special case of 
Theorem~\ref{main} below. The proof 
shares many similarities with the proof of \cite[Proposition 1]{PCrelle}
(even though the statements differ).
However, in our case, extra
technical difficulties arise since the bundles $E_j$ may have non-trivial metrics.

\begin{theorem}\label{sats1}
In the sense of currents we have
\begin{equation*}
R^p\wedge \cdots \wedge R^1= \lim_{\epsilon_1 \ll \cdots \ll \epsilon_p\to 0}
R^{p,\epsilon_p}\wedge \cdots \wedge R^{1,\epsilon_1}.
\end{equation*}
\end{theorem}

To connect with the classical Coleff-Herrera approach, assume temporarily that $\textrm{rank}\, E_j=1$, $j=1,\ldots,p$,
so that $R^j=\debar(1/f_j)$. Then Theorem~\ref{sats1} says that for any test form $\varphi$ of bidegree $(n,n-p)$ 
\begin{equation*}
\debar \frac{1}{f_p}\wedge \cdots \wedge \debar \frac{1}{f_1} . \varphi =
\lim_{\epsilon_1 \ll \cdots \ll \epsilon_p\to 0} \int_Z
\frac{\debar\chi^{\epsilon_p}}{f_p}\wedge \cdots \wedge \frac{\debar\chi^{\epsilon_1}}{f_1}\wedge \varphi, 
\end{equation*}
where $\chi^{\epsilon_j}=\chi(|f_j|^2/\epsilon_j)$. We will refer to the integral on the right hand side as 
the residue integral and denote it by $\mathcal{I}_f^{\varphi}(\epsilon)$. If the $\chi$-functions tend
to $\mathbf{1}_{[1,\infty)}$ (for a fixed generic $\epsilon \in (0,\infty)^p$) then $\mathcal{I}_f^{\varphi}(\epsilon)$
tends to Coleff-Herrera's original
residue integral
\begin{equation}\label{CHresint}
I_f^{\varphi}(\epsilon)=\int_{T(\epsilon)}\varphi/(f_1\cdots f_p),
\end{equation}   
where $T(\epsilon)=\cap_1^p\{|f_j|^2=\epsilon_j\}$ is oriented as the distinguished boundary of the corresponding 
polyhedron. In \cite{CH} Coleff and Herrera prove that the limit of $I_f^{\varphi}(\epsilon)$ along an admissible path
exists and defines a current, the nowadays called {\em Coleff-Herrera product}.
We show (see Theorem~\ref{main}) that the Coleff-Herrera product equals the product 
$\debar(1/f_p)\wedge \cdots \wedge \debar(1/f_1)$; this is folklore but to our knowledge not completely proved 
before (except in the case of complete intersection when it follows from \cite{PCrelle} and \cite{Plambda}
together with \cite{HasamArkiv}).

\smallskip

A result much in the same spirit was proven by Passare in \cite{Plambda}, where he relates 
the original Coleff-Herrera product
to residue currents defined by $\lambda$-regularizations. Passare considers
the regularization
\begin{equation}
    \label{eqpdef}
    \left.\frac{\debar |f_p|^{2\lambda}}{f_p}\wedge \dots \wedge \frac{\debar |f_1|^{2\lambda}}{f_1} \right|_{\lambda=0},
\end{equation}
i.e., instead of letting the $\lambda_i$ go to zero successively, all the $\lambda_i$ are equal to a single
$\lambda$ that tends to $0$. In that case, Passare proves that this current coincides
with an average of limits along parabolic paths of the residue integral, as considered in \cite{PCrelle},
irrespectively of whether $f$ defines a complete intersection or not.

\begin{center}
---
\end{center}

\noindent The product $R^{k}\wedge \cdots \wedge R^1$ does in general not have any natural commutation properties. For instance,
$\debar(1/(zw))\wedge \debar (1/z)=0$ while $\debar(1/z)\wedge \debar(1/(zw))=\debar(1/z^2)\wedge \debar(1/w)$, where the 
last product simply is the tensor product. However, if the $f_j$ define a complete intersection, i.e., 
$\textrm{codim}\, \{f_1=\cdots=f_p=0\}= \sum_j\textrm{rank}\, E_j$, then it is known (see, e.g., \cite{MatsAArk}) 
that the product is commutative; the case when all the $E_j$ have rank $1$ is proved in \cite{CH}.
\begin{remark}
Recall that the currents $R^j$ take values in $\Lambda E_j$. The sum of the degree of $R^j$ in $\Lambda E_j$ and its 
form-degree is even. Therefore the product is naturally commutative. If the $E_j$ are trivial line bundles that we do not
make any distinction between, then the product is anti-commutative; this is the classical Coleff-Herrera setting.
\end{remark}

\begin{theorem}\label{simple-lambda}
Assume that the $f_j$ define a complete intersection. Then for every test form $\varphi$
\begin{equation*}
(\lambda_1,\cdots,\lambda_p) \mapsto
\int_Z R^{p,\lambda_p}\wedge \cdots \wedge R^{1,\lambda_1}\wedge \varphi
\end{equation*}
has an analytic continuation to a neighborhood of the origin in $\C^p$.
\end{theorem}

This result is a special case of our Theorem~\ref{lambda-main}, which generalizes 
\cite[Theorem 1]{HasamArkiv}. The case when $p=2$ and $\textrm{rank}\, E_j=1$ was proved by 
Berenstein-Yger (see, e.g., \cite{BGVY}). 
The following result is a special case of Theorem~\ref{epsilon-main}, which
generalizes \cite[Theorem 1]{JebHs}.

\begin{theorem}\label{jebhs+}
Assume that the $f_j$ define a complete intersection. Then for every test form $\varphi$
\begin{equation*}
(\epsilon_1,\ldots,\epsilon_p) \mapsto \int_Z R^{p,\epsilon_p}\wedge \cdots \wedge R^{1,\epsilon_1}\wedge \varphi
\end{equation*} 
is H\"{o}lder continuous on $[0,\infty)^p$.
\end{theorem}

For this result it is crucial that the $\chi$-functions used to regularize the $R^j$ are smooth. In fact,
Passare-Tsikh, \cite{PTex}, found a quite simple tuple $(f_1,f_2)$ defining a complete intersection in $\C^2$
and a test form $\varphi$ such that the classical Coleff-Herrera residue integral $I_{(f_1,f_2)}^{\varphi}(\epsilon)$
is discontinuous at $\epsilon=0$. Soon after Bj\"{o}rk found generic families of such examples, see, e.g., \cite{JebAbel}. 

\begin{center}
---
\end{center}

\noindent Let us give some background and motivation for the kind of products considered here.
Products of Cauchy-Fantappi\`{e}-Leray type currents were first studied by Wulcan, \cite{WArkiv}. Wulcan
defines the product as the value at $\lambda=0$ of the 
analytic continuation of $\lambda \mapsto R^{p,\lambda}\wedge \cdots \wedge R^{1,\lambda}$. In the non-complete intersection case 
Wulcan's product is different from our; in the case that all $E_j$ have rank $1$,
$R^{p,\lambda}\wedge \cdots \wedge R^{1,\lambda}|_{\lambda=0}$ coincides with Passare's product, \eqref{eqpdef}.
Passare-Wulcan products satisfy several 
natural computation rules and are quite useful but it has turned out that the recursive definition discussed above 
often is more natural. In particular, the St\"{u}ckrad-Vogel intersection algorithm in non-proper intersection theory
is conveniently expressed using recursively defined products, see \cite{ASWY}. 

In the complete intersection case there is no ambiguity, the Coleff-Herrera product is commutative and if 
$f=(f_1,\ldots,f_p)$ then $R^f$ equals $\wedge_j \debar(1/f_j)$, see \cite{PTY} and \cite{MatsAB}.
This indicates that the Coleff-Herrera product is the 
``correct'' current to associated to a complete intersection. The Coleff-Herrera product is the minimal current extension of 
Grothendieck's cohomological residue (see, e.g., \cite{Pdr} for definitions) in the sense that it annihilated by 
anti-holomorphic functions vanishing on its support. Moreover, if $f$ defines a complete intersection then 
the annihilator ideal of $R^f$ equals the ideal generated by $f$, see \cite{Pdr} and \cite{DS}.
This property is very useful and lies behind many applications, e.g., 
explicit division-interpolation formulas and Brian\c con-Skoda type results 
(\cite{MatsAM}, \cite{BGVY}), explicit versions
of the fundamental principle (\cite{BePa}), the $\debar$-equation on complex spaces (\cite{AS}, \cite{AS2}, \cite{HePo}), 
and explicit Green currents in arithmetic intersection theory (\cite{BYJAM}). 

\smallskip 

In Section~\ref{formulering}, we give the necessary background and the general formulations of our results.
Section \ref{bevis} contains the proof of Theorems \ref{sats1} and \ref{main}.
The proof of Theorems \ref{simple-lambda}, \ref{jebhs+}, \ref{epsilon-main} and \ref{lambda-main} is the content of 
Section~\ref{bevis2}; the crucial part is Lemma~\ref{divlemma} which enables us to effectively use the assumption 
about complete intersection.

\section{Formulation of the general results}\label{formulering}
Let $Z$ be a reduced complex space of pure dimension $n$. 
We say that $\varphi$ is a smooth $(p,q)$-form on $Z$ if $\varphi$ is smooth on $Z_{reg}$, and
in a neighborhood of any $p\in Z$, there is a smooth
$(p,q)$-form $\tilde{\varphi}$ in an ambient complex manifold such that the pullback of $\tilde{\varphi}$
to $Z_{reg}$ coincides with $\varphi\lvert_{Z_{reg}}$ close to $p$.
The $(p,q)$-test forms on $Z$, $\mathscr{D}_{p,q}(Z)$, are defined as the smooth compactly supported 
$(p,q)$-forms (with a suitable topology) and the space of $(p,q)$-currents on $Z$, $\mathscr{D}'_{p,q}(Z)$, is the
dual of $\mathscr{D}_{n-p,n-q}(Z)$. More concretely, if $i\colon Z\to \Omega \subset \C^N$ is an embedding and
$\mu$ is a $(p,q)$-current on $Z$ then $i_* \mu$ is an $(N-n+p,N-n+q)$-current in $\Omega$ that vanishes on
test forms $\xi$ such that $i^*\xi=0$ on $Z_{reg}$. Conversely, such a current in $\Omega$ defines a current
on $Z$. See, e.g., \cite{larkang} for a more thorough discussion.

Let $x$ be a complex coordinate on $\C$.
Recall that the principal value current $1/x^m$ can be computed as the value at $\lambda=0$
of the analytic continuation of $|x|^{2\lambda}/x^m$; the residue current $\debar (1/x^m)$ then is the value 
at $\lambda=0$ of $\debar |x|^{2\lambda}/x^m$. Since one can take tensor products of one-variable currents
it follows that
\begin{equation}\label{T}
T=\frac{1}{x_1^{\alpha_1}}\wedge \cdots \wedge \frac{1}{x_p^{\alpha_p}}\wedge 
\frac{\vartheta(x)}{x_{p+1}^{\alpha_{p+1}} \cdots x_n^{\alpha_n}}
\end{equation}
is a well defined current in $\C^n$; here $\alpha_1,\ldots,\alpha_p$ are positive integers, 
$\alpha_{p+1},\ldots,\alpha_n$ are non-negative integers, and
$\vartheta$ is a smooth compactly supported form. Such a current $T$ is called an {\em elementary}
{\em pseudomeromorphic} current.  
Following \cite{AWCrelle} we say that a current $\mu$ on $Z$ is {\em pseudomeromorphic}, $\mu \in \mathcal{PM}(Z)$,
if $\mu$ locally is a finite sum of push-forwards $\pi^1_* \cdots \pi^m_* \tau$ under maps
\begin{equation*}
X^m \stackrel{\pi^m}{\longrightarrow} \cdots
\stackrel{\pi^2}{\longrightarrow} X^1 \stackrel{\pi^1}{\longrightarrow} Z,
\end{equation*}
where each $\pi^j$ is either a modification or an open inclusion and $\tau$ is an elementary pseudomeromorphic 
current on $X^m$.
It follows that the class of pseudomeromorphic currents
is closed under $\debar$ and multiplication with smooth forms, and that the push-forward
of a pseudomeromorphic current by a modification is pseudomeromorphic.

\begin{lemma} \label{pmmultlemma}
    Let $f$ be a holomorphic function, and let $T \in \mathcal{PM}(Z)$.
    If $\tilde{f}$ is a holomorphic function such that $\{ \tilde{f} = 0 \} = \{ f = 0 \}$
    and $v$ is a smooth non-zero function, then $(|\tilde{f} v|^{2\lambda}/f) T$ and $(\debar |\tilde{f}v|^{2\lambda}/f)\wedge T$ have
    current-valued analytic continuations to $\lambda = 0$ and the values at $\lambda = 0$ are pseudomeromorphic
    and independent of the choices of $\tilde{f}$ and $v$.
    Moreover, if $\chi={\bf 1}_{[1,\infty)}$, or a smooth approximation thereof, then
    \begin{equation} \label{eqepsilonlambda}
        \left.\frac{|\tilde{f} v|^{2\lambda}}{f} T\right|_{\lambda = 0} = \lim_{\epsilon \to 0^+} \frac{\chi^\epsilon}{f} T
        \quad\text{and}\quad 
        \left.\frac{\debar |\tilde{f} v|^{2\lambda}}{f} \wedge T\right|_{\lambda = 0} = \lim_{\epsilon \to 0^+} \frac{\debar\chi^\epsilon}{f}\wedge T,
    \end{equation}
where $\chi^\epsilon = \chi(|\tilde{f}v|^2/\epsilon)$.
\end{lemma}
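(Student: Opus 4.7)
The strategy is to reduce, via two applications of resolution of singularities, to a tensor product of classical one-variable principal-value facts. By the definition of pseudomeromorphic currents, $T$ is a locally finite sum of push-forwards $\pi_*\tau$ of elementary currents of the form \eqref{T} under modifications $\pi$. For $\mathfrak{Re}\,\lambda\gg 1$ the function $|\pi^*(\tilde f v)|^{2\lambda}/\pi^* f$ is continuous on the chart where $\tau$ lives, because $\{\tilde f=0\}=\{f=0\}$ is preserved by pullback and the nowhere-vanishing $v$ does not change this; the projection formula then gives, as currents,
\begin{equation*}
\frac{|\tilde f v|^{2\lambda}}{f}\,\pi_*\tau\ =\ \pi_*\!\left(\frac{|\pi^*(\tilde f v)|^{2\lambda}}{\pi^* f}\,\tau\right),
\end{equation*}
and similarly with $\debar|\tilde f v|^{2\lambda}/f$ in place of $|\tilde f v|^{2\lambda}/f$. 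Uniqueness of analytic continuation in $\lambda$ then propagates every assertion of the lemma through push-forward, so we may assume $T=\tau$ is elementary on some $\C^N$, with $f,\tilde f,v$ replaced by their pullbacks.

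Next, apply Hironaka's theorem once more on the chart of $\tau$ to a modification $\sigma$ along which $\sigma^* f = x^\nu u_1$ and $\sigma^* \tilde f = x^\mu u_2$ simultaneously, with $u_1,u_2$ nowhere vanishing; the equality of vanishing loci forces $\operatorname{supp}\nu=\operatorname{supp}\mu$ in each chart. Absorbing $|u_2|^{2\lambda}|v|^{2\lambda}/u_1$ into the smooth form of $\tau$ (it is smooth in $x$ and holomorphic in $\lambda$ near $0$), the problem decouples into a tensor product in the coordinates $x_i$. In each coordinate one is reduced to the Herrera--Lieberman facts that
\begin{equation*}
\lambda\mapsto\frac{|x_i|^{2\mu_i\lambda}}{x_i^{a_i}}\qquad\text{and}\qquad \lambda\mapsto\frac{\debar|x_i|^{2\mu_i\lambda}}{x_i^{a_i}}
\end{equation*}
extend analytically across $\lambda=0$ with values $1/x_i^{a_i}$ and $\debar(1/x_i^{a_i})$, and similarly when these are multiplied by a preexisting tensor factor $\debar(1/x_j^{b_j})$ of $\tau$; the disjoint-supports condition of \eqref{T} ensures no collisions between the $a_i$ and the $b_j$. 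Reassembling the tensor factors, the value at $\lambda=0$ is again an elementary current of the form \eqref{T}, hence pseudomeromorphic after push-forward, and it manifestly depends only on $\sigma^* f$ --- not on $\mu$, $u_2$, or $v$ --- which gives the independence of $\tilde f$ and $v$.

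The $\chi$-limit statement \eqref{eqepsilonlambda} is proved in parallel: in each one-variable tensor factor, $\chi(|x_i|^{2\mu_i}|u_2|^2|v|^2/\epsilon)/x_i^{a_i}$ and its $\debar$-version converge, by dominated convergence against any test form, to the same principal values. The main obstacle is organizational rather than computational: one must confirm at each step that the operations of the lemma commute both with the resolutions and with push-forward of currents. This is why it is crucial to establish every identity first at $\mathfrak{Re}\,\lambda\gg 1$, where the relevant factor is an ordinary continuous function and the projection formula is elementary, and only then invoke uniqueness of analytic continuation to conclude.
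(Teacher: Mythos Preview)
Your overall strategy matches the paper's: reduce via the projection formula to an elementary $\tau$, then arrange $f,\tilde f$ to be monomial, and finish with one-variable principal-value facts. But the second step has a real gap. After you apply the modification $\sigma$ to obtain $\sigma^*f=x^\nu u_1$ and $\sigma^*\tilde f=x^\mu u_2$, you go on to speak of ``the smooth form of $\tau$'' and ``a preexisting tensor factor $\debar(1/x_j^{b_j})$ of $\tau$'' as if $\tau$ were still elementary in the \emph{new} coordinates $x$. It is not: $\tau$ was elementary in the old chart coordinates, and currents do not pull back under $\sigma$. What one actually needs (this is the content of the argument from \cite{AWCrelle} that the paper invokes) is to choose $\sigma$ so that it simultaneously monomializes the old coordinate functions as well, and then check that $\tau=\sigma_*\tau'$ with $\tau'$ again a sum of elementary currents in the new coordinates. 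Only then are $f$ monomial and the current elementary in the \emph{same} chart, and your tensor-product reduction becomes legitimate.

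Relatedly, your claim that ``the disjoint-supports condition of \eqref{T} ensures no collisions between the $a_i$ and the $b_j$'' misreads \eqref{T}: that condition separates the principal-value multi-index $\alpha$ from the residue multi-index $\beta$ \emph{within} $\tau$; it says nothing about the exponents coming from $f$. Writing $f=x^\gamma u$ once both reductions are in place, the case $\operatorname{supp}\gamma\cap\operatorname{supp}\beta\neq\emptyset$ genuinely occurs. The paper disposes of it by observing that then $(|\tilde f v|^{2\lambda}/f)T$ for $\mathfrak{Re}\,\lambda\gg1$ and $(\chi^\epsilon/f)T$ for $\epsilon>0$ are both supported in $\{x_i=0:i\in\operatorname{supp}\beta\}$ and hence vanish identically. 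After that, the paper also uses the Leibniz rule to reduce the $\debar$-equality in \eqref{eqepsilonlambda} to the other one, and a smooth (non-holomorphic) change of variables of the type \eqref{varbyte} to absorb the unit $|\tilde u v|^2$ before separating variables and citing the one-variable lemma from \cite{JebHs}. Your parallel $\chi$-argument via dominated convergence is fine in spirit, but it too presupposes that the collision case and the coordinate issue above have been handled.
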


\begin{proof}
    The first part is essentially Proposition 2.1 in \cite{AWCrelle}, except that there, $Z$ is a complex manifold, $\tilde{f} = f$
    and $v \equiv 1$. However, with suitable resolutions of singularities, the proof in \cite{AWCrelle} goes through in the
    same way in our situation, as long as we observe that in $\C$
    \begin{equation*}
        \frac{|x^{\alpha'}v|^{2\lambda}}{x^\alpha} \frac{1}{x^\beta} \quad \text{and} \quad
        \frac{|x^{\alpha'}v|^{2\lambda}}{x^\alpha} \debar \frac{1}{x^\beta}
    \end{equation*}
    have analytic continuations to $\lambda = 0$, and the values at $\lambda = 0$ are $1/x^{\alpha + \beta}$
    and $0$ respectively, independently of $\alpha'$ and $v$, as long as $\alpha' > 0$ and $v \neq 0$
    (and similarly with $\debar |x^{\alpha'}v|^{2\lambda}/x^\alpha$).

    By Leibniz rule, it is enough to consider the first equality in \eqref{eqepsilonlambda}, since if we have proved the first equality, then
    \begin{align*}
        & \lim_{\epsilon \to 0} \frac{\debar \chi^\epsilon}{f}\wedge T = \lim_{\epsilon \to 0} \debar \left( \frac{\chi^\epsilon}{f} T  \right)
        - \frac{\chi^\epsilon}{f} \debar T \\
        &= \left.\left(\debar\left(\frac{|\tilde{f}v|^{2\lambda}}{f} T\right) -
        \frac{|\tilde{f}v|^{2\lambda}}{f} \debar T\right)\right|_{\lambda = 0}
        = \left.\frac{\debar |\tilde{f}v|^{2\lambda}}{f} \wedge T \right|_{\lambda = 0}.
    \end{align*}
    To prove the first equality in \eqref{eqepsilonlambda}, we observe first that in the same way as in the first part, we
    can assume that $f = x^{\gamma} u$ and $\tilde{f} = x^{\tilde{\gamma}} \tilde{u}$,
    where $u$ and $\tilde{u}$ are non-zero holomorphic functions.
    Since $T$ is a sum of push-forwards of elementary currents,
    we can assume that $T$ is of the form \eqref{T}. 
    Note that if $\supp \gamma \cap \supp \beta \neq \emptyset$, then
    $(|\tilde{f} v|^{2\lambda}/f) T = 0$ for $\mathfrak{Re}\, \lambda \gg 0$ and $(\chi(|\tilde{f}v|^2/\epsilon) /f) T = 0$
    for $\epsilon > 0$, since $\supp T \subseteq \{ x_i = 0, i \in \supp \beta \}$.
    Thus, we can assume that $\supp \gamma \cap \supp \beta = \emptyset$. By a smooth (but non-holomorphic) change of variables,
    as in Section~\ref{bevis} (equations \eqref{varbyte}), we can assume that $|\tilde{u} v|^2 \equiv 1$.
    Thus, since $(|x^{\tilde{\gamma}}|^{2\lambda} / x^\gamma) (1/x^\alpha)$, $(\chi(|x^{\tilde{\gamma}}|^2/\epsilon)/x^\gamma) (1/x^\alpha)$
    depend on variables disjoint from the ones that $\wedge_{\beta_i \neq 0} \debar (1/x_i^{\beta_i})$ depends on,
    it is enough to prove that
    \begin{equation*}
        \left.\frac{|x^{\tilde{\gamma}}|^{2\lambda}}{x^\gamma} \frac{1}{x^\alpha}\right|_{\lambda = 0} =
        \lim_{\epsilon \to 0} \frac{\chi(|x^{\tilde{\gamma}}|^2/\epsilon)}{x^\gamma} \frac{1}{x^\alpha},
    \end{equation*}
    which is Lemma 2 in \cite{JebHs}.
\end{proof}

\noindent Let $E_1,\ldots,E_q$ be holomorphic hermitian vector bundles 
over $Z$,
let $f_j$ be a holomorphic section of $E_j^*$, $j=1,\ldots,q$, and let $s_j$ be the section of $E_j$
with pointwise minimal norm such that $f_j \cdot s_j=|f_j|^2$. Outside $\{f_j=0\}$, define
\begin{equation*}
    u^j_k = \frac{s_j\wedge (\debar s_j)^{k-1}}{|f_j|^{2k}}.
\end{equation*}
It is easily seen that if $f_j = f_j^0 f_j'$, where $f_j^0$ is a holomorphic function and $f_j'$ is a non-vanishing section,
then $u^j_k = (1/f_j^0)^k (u')^j_k$, where $(u')^j_k$ is smooth across
$\{ f_j = 0 \}$.
We let
\begin{equation}\label{Udef}
U^j=\sum_{k=1}^{\infty} \left.|\tilde{f}_j|^{2\lambda} u^j_k \right|_{\lambda=0},
\end{equation}
where $\tilde{f}_j$ is any holomorphic section of $E_j^*$ such that $\{\tilde{f}_j=0\}=\{f_j=0\}$.
The existence of the analytic continuation is a local statement, so we can assume that 
$f_j = \sum f_{j,k} \mathfrak{e}_{j,k}^*$,
where $\mathfrak{e}_{j,k}^*$ is a local holomorphic frame for $E_j^*$. After principalization
we can assume that the ideal $\langle f_{j,1},\dots,f_{j,k_j} \rangle$ is generated by, e.g., $f_{j,0}$.
By the representation $u^j_k = (1/f_{j,0})^k (u')^j_k$,
the existence of the analytic continuation of $U^j$ in \eqref{Udef} then follows from Lemma \ref{pmmultlemma}.
Let $U^j_k$ denote the term of $U^j$ that takes values in $\Lambda^kE_j$; $U^j_k$ is thus
a $(0,k-1)$-current with values in $\Lambda^kE_j$. Let $\delta_{f_j}$ denote interior multiplication
with $f_j$ and put $\nabla_{f_j}=\delta_{f_j}-\debar$; it is not hard to verify that 
$\nabla_{f_j}U=1$ outside $f_j=0$. 
We define the Cauchy-Fantappi\`e-Leray type residue current, $R^j$, of $f_j$ by $R^j=1-\nabla_{f_j}U^j$.
One readily checks that 
\begin{eqnarray}\label{residydef}
R^j &=& R^j_0+\sum_{k=1}^{\infty}R^j_k \\
&=& (1-|\tilde{f}_j|^{2\lambda})|_{\lambda=0}+\sum_{k=1}^{\infty}
\left.\debar|\tilde{f}_j|^{2\lambda}\wedge\frac{s_j\wedge (\debar s_j)^{k-1}}{|f_j|^{2k}}\right|_{\lambda=0}, \nonumber
\end{eqnarray} 
where, as above, $\tilde{f}_j$ is a holomorphic section such that $\{\tilde{f}_j=0\}=\{f_j=0\}$.

\begin{remark}
Notice that if $E_j$ has rank $1$, then $U_j$ simply equals $1/f_j$ and 
$R^j=1-\nabla_{f_j} (1/f_j)=1-f_j\cdot (1/f_j)+\debar (1/f_j)=\debar (1/f_j)$.
\end{remark}

We now define a non-commutative calculus for the currents $U^i_k$ and $R^j_{\ell}$ recursively as follows.
\begin{definition}\label{proddef}
If $T$ is a product of some $U^i_k$ and $R^j_{\ell}$, then we define
\begin{eqnarray*}
& \bullet & U^j_k\wedge T=
\left.|\tilde{f}_j|^{2\lambda}\frac{s_j\wedge (\debar s_j)^{k-1}}{|f_j|^{2k}}\wedge T \right|_{\lambda=0} \\
& \bullet & \left.R^j_0\wedge T=(1-|\tilde{f}_j|^{2\lambda})T \right|_{\lambda=0} \\
& \bullet & R^j_k\wedge T=
\left.\debar|\tilde{f}_j|^{2\lambda}\wedge \frac{s_j\wedge (\debar s_j)^{k-1}}{|f_j|^{2k}}\wedge T \right|_{\lambda=0},
\end{eqnarray*}
where $\tilde{f}_j$ is any holomorphic section of $E^*_j$ with $\{\tilde{f}_j=0\}=\{f_j=0\}$.
\end{definition}

Notice that after principalization the pull-back of $u_k^j$ is semi\hyp{}meromorphic; in particular 
$U^j$ and $R^j$ are pseudomeromorphic.  
Thus, by Lemma~\ref{pmmultlemma}, the analytic continuations
of Definition~\ref{proddef} exist and the values at $\lambda=0$ are pseudomeromorphic as well.

\begin{remark}
Under assumptions about complete intersection, these products have the suggestive
commutation properties, e.g., if $\textrm{codim}\, \{f_i=f_j=0\}= \rank E_i + \rank E_j$,
then $R^i_k\wedge R^j_{\ell}=R^j_{\ell}\wedge R^i_k$, $R^i_k\wedge U^j_{\ell}=U^j_{\ell}\wedge R^i_k$, and
$U^i_k\wedge U^j_{\ell}=-U^j_{\ell}\wedge U^i_k$, (see, e.g., \cite{MatsAArk}). 
In general, there are no simple relations.
However, products involving only $U$:s are always anti-commutative.
\end{remark}

Now, consider collections $R=\{R^1_{k_1},\ldots, R^p_{k_p}\}$ and 
$U=\{U^{p+1}_{k_{p+1}},\ldots,U^q_{k_q}\}$
and put
$(P_1,\ldots,P_q)=(R^1_{k_1},\ldots,R^p_{k_p},U^{p+1}_{k_{p+1}},\ldots,U^q_{k_q})$. For a permutation $\nu$ of 
$\{1,\ldots,q\}$ we define
\begin{equation}\label{URdef}
(UR)^{\nu}=P_{\nu(q)}\wedge \cdots \wedge P_{\nu(1)}.
\end{equation}

From \eqref{Udef} and \eqref{residydef} we get natural $\lambda$-regularizations, $P^{\lambda}_j$, 
of $P_j$ and from Definition~\ref{proddef} we have
$(UR)^{\nu}= P^{\lambda_q}_{\nu(q)}\wedge \cdots \wedge P^{\lambda_1}_{\nu(1)} |_{\lambda_1=0} \cdots|_{\lambda_q=0}$,
i.e., we set successively $\lambda_1=0$, then $\lambda_2 = 0$ and so on.
The following result is proved in \cite{ASWY}.

\begin{theorem}\label{aswy}
Let $a_1>\cdots >a_q>0$ be integers and $\lambda$ a complex variable. Then
\begin{equation*}
\lambda\mapsto P^{\lambda^{a_q}}_{\nu(q)}\wedge \cdots \wedge P^{\lambda^{a_1}}_{\nu(1)}
\end{equation*}
has a current-valued analytic continuation to a neighborhood of the half-axis $[0,\infty)\subset \C$
and the value at $\lambda=0$ equals $(UR)^{\nu}$.
\end{theorem}

The recursively defined product $(UR)^{\nu}$ can thus be obtained as the value at zero of a one-variable $\zeta$-type
function. From an algebraic point of view, this is desirable since one can derive functional equations
and use Bernstein-Sato theory to study $(UR)^{\nu}$. 

\smallskip

There are also more concrete and explicit regularizations of the currents
$U^i_k$ and $R^j_{\ell}$ inspired by \cite{CH} and \cite{PCrelle}. Let $\chi={\bf 1}_{[1,\infty)}$, or 
a smooth approximation thereof that is $0$ close to $0$ and $1$ close to $\infty$. 
It follows from \cite{hasamJFA}, or after principalization from Lemma \ref{pmmultlemma}, that 
\begin{equation}\label{Uepsilon}
U^j_k=\lim_{\epsilon\to 0^+}\chi(|\tilde{f}_j|^2/\epsilon) \frac{s_j\wedge (\debar s_j)^{k-1}}{|f_j|^{2k}}.
\end{equation}
\begin{equation}\label{Repsilon}
R^j_k=\lim_{\epsilon\to 0^+}\debar\chi(|\tilde{f}_j|^2/\epsilon)\wedge 
\frac{s_j\wedge (\debar s_j)^{k-1}}{|f_j|^{2k}},\,\,
k>0,
\end{equation}
and similarly for $k=0$; as usual, $\{\tilde{f}_j=0\}=\{f_j=0\}$. 
Of course, the limits are in the current sense and if $\chi={\bf 1}_{[1,\infty)}$,
then $\epsilon$ is supposed to be a regular value for $|f_j|^2$ and $\debar\chi(|f_j|^2/\epsilon)$ is to be 
interpreted as integration over the manifold $|f_j|^2=\epsilon$. We denote the regularizations 
given by \eqref{Uepsilon} and \eqref{Repsilon} by $P_j^{\epsilon}$.

\begin{theorem}\label{main}
Let $R=\{R^1_{k_1},\ldots,R^p_{k_p}\}$ and $U=\{U^{p+1}_{k_{p+1}},\ldots,U^q_{k_q}\}$
be collections of currents defined in \eqref{Udef} and \eqref{residydef}.
Let $\nu$ be a permutation of $\{1,\ldots,q\}$ and let $(UR)^{\nu}$ be the product defined in \eqref{URdef}.
Then
\begin{equation*}
(UR)^{\nu}=\lim_{\epsilon_1 \ll \cdots \ll \epsilon_q \to 0}P_{\nu(q)}^{\epsilon_q}\wedge \cdots \wedge P_{\nu(1)}^{\epsilon_1},
\end{equation*}
where, as above, $(P_1,\ldots,P_q)=(R^1_{k_1},\ldots,R^p_{k_p},U^{p+1}_{k_{p+1}},\ldots,U^q_{k_q})$; see 
Definition~\ref{limitdef} for the meaning of the limit. If $\chi={\bf 1}_{[1,\infty)}$, 
we require that $\epsilon\to 0$ along an admissible path in the sense of Coleff-Herrera.
\end{theorem}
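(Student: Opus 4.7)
The plan is an induction on the number of factors $q$, followed by a resolution-of-singularities argument that converts the iterated limit produced by the induction into the limit along admissible paths required by the statement.

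For the base case $q=1$, the claim is exactly \eqref{Uepsilon}--\eqref{Repsilon}. For the inductive step, set $T_{q-1} := P_{\nu(q-1)}\wedge\cdots\wedge P_{\nu(1)}$, which is pseudomeromorphic by the observation following Definition \ref{proddef}. Applying Lemma \ref{pmmultlemma} with this $T_{q-1}$ in the role of $T$ yields
\begin{equation*}
(UR)^\nu = \lim_{\epsilon_q\to 0^+} P_{\nu(q)}^{\epsilon_q}\wedge T_{q-1},
\end{equation*}
and combining with the induction hypothesis gives the iterated form of the identity: regularizing one factor at a time, starting from the innermost, produces $(UR)^\nu$.

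The substantive task is then to upgrade this iterated statement to one about limits along admissible paths $\epsilon_1\ll\cdots\ll\epsilon_q\to 0$. For this I would apply Hironaka's theorem to obtain $\pi\colon \tilde Z\to Z$ on which the ideals locally generated by the components of each $f_j$ (and all their products) are principal with monomial generators. On such a chart $|f_j|^2 = |z^{\alpha_j}|^2 v_j$ with $v_j$ smooth and positive, and, up to a smooth bundle-valued factor, $P_{\nu(j)}^{\epsilon_j}$ is of the form $\chi(|z^{\alpha_j}|^2 v_j/\epsilon_j)/z^{\beta_j}$ or $\debar\chi(|z^{\alpha_j}|^2 v_j/\epsilon_j)/z^{\beta_j}$. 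In this monomial model the analysis follows the scheme of Passare's Proposition~1 in \cite{PCrelle}: $\chi(|z^{\alpha_j}|^2 v_j/\epsilon_j)$ stabilizes to $1$ uniformly off $\{z^{\alpha_j}=0\}$, while $\debar\chi(|z^{\alpha_j}|^2 v_j/\epsilon_j)$ concentrates on a tubular neighborhood of $\{z^{\alpha_j}=0\}$ that shrinks with $\epsilon_j$. The admissibility condition $\epsilon_{j-1}\leq C_{jk}\,\epsilon_j^k$ for all $k$ is then precisely what forces each inner factor to have stabilized on every relevant scale before the next factor's cutoff takes effect, so that iterating and taking the simultaneous limit along an admissible path produce the same value.

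I expect the main obstacle to be the treatment of the anti-holomorphic metric factors $\debar s_j/|f_j|^2$ arising when $\rank E_j>1$ and the hermitian metric on $E_j$ is non-trivial. The smooth parts of $P_{\nu(j)}^{\epsilon_j}$ then depend on $\bar z$ in a way not controlled by a naive Taylor expansion in the holomorphic coordinates on $\tilde Z$; this is the source of the ``extra technical difficulties'' relative to \cite{PCrelle} mentioned in the introduction. I would handle it by writing $s_j$ in a resolved holomorphic frame, splitting off the singular monomial factor from a smooth positive remainder, and making a smooth but non-holomorphic change of coordinates that normalizes $v_j$ to $1$ (in the spirit of the proof of Lemma \ref{pmmultlemma}). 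This reduces matters to a purely monomial situation in which Passare's estimates apply with only routine modifications, completing the step from iterated to admissible-path limits.
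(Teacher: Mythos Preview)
Your proposal is correct and follows essentially the same approach as the paper: establish the iterated limit via Lemma \ref{pmmultlemma}, resolve to monomials, normalize the metric factors by the non-holomorphic change of variables \eqref{varbyte}, and reduce to Passare's computation in \cite{PCrelle}. The paper spells out two points you subsume under ``routine modifications'': a rank-reduction step (Lemma \ref{ranklemma}) when the exponent matrix is degenerate, and a short induction at the end showing that the slightly broader class of paths in Definition \ref{limitdef} (which allows some $\epsilon_j$ to reach $0$ before the others) yields the same limit as admissible paths.
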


Thus $(UR)^{\nu}$ can be computed as the weak limit of an explicit smooth form and moreover, Definition~\ref{proddef} give the 
Coleff-Herrera product (in case the bundles $E_j$ have rank $1$).

\begin{remark}
It might be more natural to consider products of whole Cauchy-Fantappi\`e-Leray type currents,
$U^j$ and $R^j$, as in \eqref{Udef} and \eqref{residydef}, and not just products of their
components $U^j_k$ and $R^j_k$, cf., for example \cite{ASWY}.
However, since such a product is a sum of products of their components, it follows readily
that Theorem~\ref{main} holds also for products of whole Cauchy-Fantappi\`e-Leray type
currents.
\end{remark}

\subsection{The complete intersection case}

Assume that $f_1,\ldots,f_q$ define a complete intersection, i.e., that 
$\textrm{codim}\, \{f_1=\cdots =f_q=0\}=\rank E_1+\cdots + \rank E_q$. Then we know that 
the calculus defined in Definition~\ref{proddef} satisfies the suggestive commutation properties, but 
we have in fact the following much stronger results.

\begin{theorem}\label{epsilon-main}
Assume that $f_1,\ldots,f_q$ define a complete intersection on $Z$, let
$(P_1,\ldots,P_q)=(R^1_{k_1},\ldots,R^p_{k_p},U^{p+1}_{k_{p+1}},\ldots,U^q_{k_q})$, and let
$P^{\epsilon_j}_{j}$ be an $\epsilon$-regularization of $P_j$ defined by \eqref{Uepsilon} and \eqref{Repsilon}
with smooth $\chi$-functions. Then we have
\begin{equation*}
\left| \int_Z P^{\epsilon_1}_1\wedge \cdots \wedge P_q^{\epsilon_q}\wedge \varphi -
P_1\wedge \cdots \wedge P_q .\, \varphi \right| \leq C \|\varphi\|_{C^M} (\epsilon_1^{\omega} + \dots + \epsilon_q^\omega),
\end{equation*}
where $M$ and $\omega$ only depend on $f_1,\ldots, f_q$, $Z$, and $\supp \varphi$ while
$C$ also depends on the $C^M$-norm of the $\chi$-functions.
\end{theorem}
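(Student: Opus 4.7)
The plan is to combine a common log-resolution of the data $(f_1,\ldots,f_q)$ with a telescoping sum that replaces the $\epsilon$-regularizations one factor at a time, and then to use the complete intersection hypothesis, via Lemma \ref{divlemma}, to separate variables in the local monomial model so that each telescoping term can be bounded by a one-variable Mellin-type estimate.

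First, I would localize via a partition of unity near $\supp\varphi$ and pull back by a principalization $\pi : \widetilde{Z} \to Z$ that simultaneously monomializes the ideals generated by the components of $f_1,\ldots,f_q$. In local coordinates on $\widetilde{Z}$, each pulled-back $f_j$ factors as $x^{\alpha_j}\cdot f_j'$ with $f_j'$ a non-vanishing frame of $\pi^* E_j^*$. Using the explicit formulas for $U_k^j$ and $R_k^j$, this turns each $P_j^{\epsilon_j}$ into a smooth form built from $x^{\alpha_j}$ together with a $\chi^{\epsilon_j}$-cutoff of $|x^{\alpha_j}|^2$, and each limit $P_j$ into the corresponding pseudomeromorphic current. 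Since $\pi$ is proper and $\|\pi^*\varphi\|_{C^M}\lesssim\|\varphi\|_{C^M}$ on the relevant chart, it suffices to prove the estimate downstairs, with the constants depending only on the resolution.

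Second, I would telescope the error as
\begin{equation*}
P_1^{\epsilon_1}\wedge\cdots\wedge P_q^{\epsilon_q} \;-\; P_1\wedge\cdots\wedge P_q \;=\; \sum_{j=1}^{q} \bigl( Q_{j-1} - Q_j \bigr),
\end{equation*}
where $Q_j := P_1\wedge\cdots\wedge P_j\wedge P_{j+1}^{\epsilon_{j+1}}\wedge\cdots\wedge P_q^{\epsilon_q}$ (interpreting $P_1\wedge\cdots\wedge P_j$ via Definition \ref{proddef} and pairing with the smooth form $P_{j+1}^{\epsilon_{j+1}}\wedge\cdots\wedge P_q^{\epsilon_q}\wedge\varphi$). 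Each intermediate difference $Q_{j-1}-Q_j$ differs only in the $j$-th slot, where $P_j^{\epsilon_j}$ is replaced by $P_j$. On the resolution, this difference, tested against $\varphi$, takes the form
\begin{equation*}
\int_{\widetilde{Z}} \bigl(\chi(|x^{\alpha_j}|^2/\epsilon_j)-1\bigr)\, \frac{\tau_j}{x^{\alpha_j+\beta_j}}\wedge \pi^*\varphi
\end{equation*}
plus a $\debar\chi$-term of the same type, where $\tau_j$ is a smooth form assembled from the other factors and their regularizations.

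Third, I would invoke Lemma \ref{divlemma}. The complete intersection hypothesis $\mathrm{codim}\,\{f_1=\cdots=f_q=0\}=e_1+\cdots+e_q$ lifts, via the resolution, to the statement that the monomial supports of $\alpha_1,\ldots,\alpha_q$ can be arranged so that the denominator $1/x^{\alpha_j+\beta_j}$ arising from $P_j$ lies in variables disjoint from (or dividing into) the principal-value/residue singularities coming from the other factors $P_1,\ldots,P_{j-1},P_{j+1}^{\epsilon_{j+1}},\ldots,P_q^{\epsilon_q}$. Lemma \ref{divlemma} should yield that $\tau_j$, restricted to a neighborhood of the set where $|x^{\alpha_j}|^2\lesssim\epsilon_j$, is pseudomeromorphically bounded, with bound controlled by $\|\varphi\|_{C^M}$ and the $C^M$-norm of the other $\chi$-functions, uniformly in the remaining $\epsilon_{j'}$. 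Granted such a uniform bound, the one-variable estimate
\begin{equation*}
\bigl| \int \bigl(\chi(|x^\alpha|^2/\epsilon)-1\bigr) g(x)/x^{\alpha+\beta}\, dV \bigr| \;\leq\; C\,\epsilon^{\omega}\, \|g\|_{C^M}
\end{equation*}
for smooth compactly supported $g$ (and its $\debar\chi$ analogue, which in fact gives a better estimate) delivers the bound $C\|\varphi\|_{C^M}\epsilon_j^{\omega_j}$ for each telescoping term, and summing over $j$ gives the theorem.

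The main obstacle is precisely the third step: establishing that the mixed current $\tau_j$, which pairs not-yet-regularized residue currents $P_1,\ldots,P_{j-1}$ against smoothly regularized factors $P_{j+1}^{\epsilon_{j+1}},\ldots,P_q^{\epsilon_q}$, admits a pseudomeromorphic description with uniform bounds in the remaining $\epsilon$-parameters, and that after pulling back to the resolution its singular part is carried by variables disjoint from those carrying $x^{\alpha_j}$. This is exactly the statement that Lemma \ref{divlemma} is designed to supply under the complete intersection assumption; without it the estimate would degrade as the other $\epsilon_{j'}\to 0$, and the telescoping would fail to produce a clean $\epsilon_j^{\omega}$ bound.
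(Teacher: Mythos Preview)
Your telescoping scheme and the appeal to Lemma~\ref{divlemma} do not match what that lemma actually provides, and this creates a genuine gap precisely for the $R$-factors. Lemma~\ref{divlemma} concerns only the index set $\mathcal{K}=\{i:\ x_i\mid x^{\alpha_j}\ \text{for some } p+1\le j\le q\}$, i.e.\ the variables coming from the $U$-factors. Its content is a Taylor-type correction showing that the relevant form $\Phi$ may be replaced by $\Phi'$ so that $\Phi'\wedge\bigwedge_{i\in I}d\bar x_i/\bar x_i$ is $C^r$ for $I\subseteq\mathcal{K}$. It says nothing about separating the $R$-variables from one another; indeed Proposition~\ref{lambdapropp} shows that on the resolution the exponents $\alpha_1,\ldots,\alpha_p$ attached to the $R$-factors \emph{do} interact (the possible poles sit on hyperplanes $\sum_{j\le p}\lambda_j\alpha_{ji}=0$ with at least two nonzero coefficients). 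Consequently, for $j\le p$ your form $\tau_j$ carries genuine residue-type singularities from $P_1,\ldots,P_{j-1}$ along divisors that share variables with $x^{\alpha_j}$, so it is not ``pseudomeromorphically bounded uniformly in the remaining $\epsilon_{j'}$'' near $\{|x^{\alpha_j}|^2\lesssim\epsilon_j\}$, and the one-variable estimate you invoke does not apply.

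The paper avoids this by not telescoping all factors at once. It first proves Proposition~\ref{epsilonpropp}, which only asserts a rate for replacing the $U$-regularizations ($s>p$) while the $R$-regularizations stay at positive $\epsilon$; this is exactly what Lemma~\ref{divlemma} is tailored to deliver, and the endgame reduces to the model integral~(24) of \cite{JebHs}. The $R$-factors are then handled by an induction on $p$: one adds and subtracts $R^1_\epsilon\wedge\cdots\wedge R^p_\epsilon\wedge R^*\wedge U^*$, uses~\eqref{foljd} for the $U$-part, and applies the identity~\eqref{nablalikhet} (in its $\epsilon$-version) to rewrite an $R^j_\epsilon$ as $\debar$ of a $U$-type term minus a contracted term, thereby lowering $p$ and feeding back into Proposition~\ref{epsilonpropp}. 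Your proposal is missing this inductive reduction via~\eqref{nablalikhet}; without it the step ``bound $Q_{j-1}-Q_j$ by $C\epsilon_j^\omega$'' fails for $j\le p$.
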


\begin{theorem}\label{lambda-main}
Assume that $f_1,\ldots,f_q$ define a complete intersection on $Z$, let
$(P_1,\ldots,P_q)=(R^1_{k_1},\ldots,R^p_{k_p},U^{p+1}_{k_{p+1}},\ldots,U^q_{k_q})$, and let
$P^{\lambda_j}_{j}$ be the $\lambda$-regularization of $P_j$ given by \eqref{Udef} and \eqref{residydef}.
Then the current valued function
\begin{equation*}
\lambda \mapsto P_1^{\lambda_1}\wedge \cdots \wedge P_q^{\lambda_q},
\end{equation*}
a priori defined for $\mathfrak{Re}\, \lambda_j \gg 0$, has an analytic continuation
to a neighborhood of the half-space $\cap_1^q \{\mathfrak{Re}\, \lambda_j \geq 0\}$.
\end{theorem}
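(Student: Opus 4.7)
The plan is to reduce the statement to a local computation on a log-resolution, in the spirit of \cite{HasamArkiv}. Since analytic continuations are unique where they exist, I work in a small open set of $Z$. I would apply Hironaka's theorem simultaneously to $Z$ and to the ideals generated by the components of the $f_j$ in local holomorphic frames of $E_j^*$, producing a smooth modification $\pi\colon\tilde Z\to Z$ such that, in any coordinate chart of $\tilde Z$, the pullback $\pi^*f_j$ equals a monomial section $\sigma_j$ times a non-vanishing holomorphic section of $\pi^*E_j^*$. Since push-forward commutes with analytic continuation in $\lambda$, it suffices to continue $\pi^*(P_1^{\lambda_1}\wedge\cdots\wedge P_q^{\lambda_q}\wedge\varphi)$ chart by chart.

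In such a chart, with coordinates $z=(z_1,\ldots,z_n)$, each $P_j^{\lambda_j}$ takes the schematic form
\begin{equation*}
|\sigma_j|^{2\lambda_j}\frac{\omega_j}{\sigma_j^{k_j}}\quad\text{or}\quad\bar\partial|\sigma_j|^{2\lambda_j}\wedge\frac{\omega_j}{\sigma_j^{k_j}},
\end{equation*}
up to smooth factors that absorb the hermitian metric of $E_j$ and the harmless corrections coming from comparing $f_j$ with $\tilde f_j$. The full product then becomes a finite sum of terms of the shape $\prod_j|\sigma_j|^{2\lambda_j}\cdot\Omega(\lambda)/\prod_j\sigma_j^{k_j}\bar\sigma_j^{\ell_j}$, with $\Omega(\lambda)$ smooth and holomorphic in $\lambda$. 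Separating variables coordinate-by-coordinate reduces the continuation to a product of standard one-variable $\Gamma$-function computations, whose only potential poles sit on hyperplanes $\sum_j\alpha_{j,i}\lambda_j=-m$, $m\in\mathbb{N}$, arising from those coordinates $z_i$ which appear in several $\sigma_j$ simultaneously.

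The complete intersection hypothesis is precisely what prevents such hyperplanes from meeting the half-space $\bigcap_j\{\mathfrak{Re}\,\lambda_j\geq 0\}$, and this will be delivered by Lemma~\ref{divlemma}, the same key lemma that drives the proof of Theorem~\ref{epsilon-main}. The lemma will allow one to factor holomorphic vanishing out of each Cauchy--Fantappi\`e numerator $s_j\wedge(\bar\partial s_j)^{k_j-1}$ and redistribute it against the competing monomials $\sigma_i$, $i\neq j$, reducing the effective order of each variable in the denominator to at most one and thereby pushing the dangerous hyperplanes strictly into $\{\mathfrak{Re}\,\lambda_j<0\}$. The main obstacle is precisely the execution of this step in the presence of non-trivial bundle metrics: one must verify that the codimension condition $\textrm{codim}\,\{f_1=\cdots=f_q=0\}=e_1+\cdots+e_q$ translates, chart by chart on the resolution, into the pointwise monomial disjointness needed to apply Lemma~\ref{divlemma} to the bundle-valued numerators $s_j\wedge(\bar\partial s_j)^{k_j-1}$. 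Once that is in hand, the one-variable analytic continuations combine into a function analytic in a neighborhood of $\bigcap_j\{\mathfrak{Re}\,\lambda_j\geq 0\}$.
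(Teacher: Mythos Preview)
Your overall architecture matches the paper: Hironaka resolution, local monomialization of each $f_j$, and an appeal to Lemma~\ref{divlemma} to control the polar locus. But there is a genuine gap in the last step. After the resolution and the manipulations driven by Lemma~\ref{divlemma}, the paper does \emph{not} obtain analyticity directly; what it obtains is Proposition~\ref{lambdapropp}: the only possible poles of $\Gamma(\lambda)$ near $\bigcap_j\{\mathfrak{Re}\,\lambda_j\ge 0\}$ lie on hyperplanes of the form $\sum_{j=1}^{p}\alpha_j\lambda_j=0$ with at least two $\alpha_j>0$. These hyperplanes \emph{do} meet the closed half-space (they all pass through the origin), so your claim that the complete intersection hypothesis, via Lemma~\ref{divlemma}, ``push[es] the dangerous hyperplanes strictly into $\{\mathfrak{Re}\,\lambda_j<0\}$'' is incorrect. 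What Lemma~\ref{divlemma} actually buys is that the polar hyperplanes involve only the $R$-parameters $\lambda_1,\ldots,\lambda_p$ and never the $U$-parameters $\lambda_{p+1},\ldots,\lambda_q$; it does not by itself rule out poles through $0$ when $p\ge 2$.

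The missing ingredient is the inductive reduction on $p$ using the identity \eqref{nablalikhet},
\[
\bar\partial|f_j|^{2\lambda}\wedge u^j_k=\bar\partial\bigl(|f_j|^{2\lambda}u^j_k\bigr)-f_j\cdot\bigl(|f_j|^{2\lambda}u^j_{k+1}\bigr),
\]
which the paper invokes explicitly (``the proof of Theorem~\ref{lambda-main} follows from Proposition~\ref{lambdapropp} in a similar way as Theorem~1 in \cite{HasamArkiv} follows from Proposition~4 in \cite{HasamArkiv}''). This identity, combined with Stokes, lets one trade an $R^{\lambda_j}$-factor for $U^{\lambda_j}$-factors, thereby lowering $p$. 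Since Proposition~\ref{lambdapropp} gives analyticity outright for $p\le 1$, and since its polar description for general $p$ is compatible with this reduction, an induction on $p$ finishes the proof. Your sketch omits this step entirely. A secondary point: the complete intersection hypothesis is not used on the resolution as ``pointwise monomial disjointness''; in the proof of Lemma~\ref{divlemma} it enters on the base $Z$ as a dimension count forcing a certain smooth form $F$ to have vanishing pullback to $\bigcap_{j\in L^c}\{f_j=0\}$.
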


\begin{remark}
In case the $E_j$ are trivial with trivial metrics, Theorems \ref{epsilon-main} and \ref{lambda-main} follow
quite easily from, respectively, \cite[Theorem 1]{JebHs} and \cite[Theorem 1]{HasamArkiv} by taking averages.
As an illustration, let 
$\varepsilon_1,\ldots,\varepsilon_r$ be a nonsense basis and let $f_1,\ldots,f_r$ be holomorphic functions. 
Then we can write $s=\bar{f}\cdot \varepsilon$ and so 
$u_k=(\bar{f}\cdot \varepsilon)\wedge (d\bar{f}\cdot \varepsilon)^{k-1}/|f|^{2k}$.
A standard computation shows that 
\begin{equation*}
\int_{\alpha \in \CP^{r-1}}\frac{|\alpha \cdot f|^{2\lambda}\alpha \cdot \varepsilon}{
(\alpha\cdot f)|\alpha|^{2\lambda}}dV
=A(\lambda)|f|^{2\lambda}\frac{\bar{f}\cdot \varepsilon}{|f|^2},
\end{equation*}
where $dV$ is the (normalized) Fubini-Study volume form and $A$ is holomorphic with $A(0)=1$. It follows that
\begin{equation*}
\int_{\alpha_1,\ldots,\alpha_k\in \CP^{r-1}}\bigwedge_1^k\frac{\debar |\alpha_j\cdot f|^{2\lambda}}{\alpha_j\cdot f}
\wedge \frac{\alpha_j \cdot \varepsilon}{|\alpha_j|^{2\lambda}}dV(\alpha_j)=
A(\lambda)^k\debar (|f|^{2k\lambda}u_k).
\end{equation*}
Elaborating this formula and using \cite[Theorem 1]{HasamArkiv} one can show Theorem~\ref{lambda-main} in the case
of trivial $E_j$ with trivial metrics. The general case can probably also be handled in a similar manner but the 
computations become more involved and we prefer to give direct proofs.
\end{remark}

\section{Proof of Theorem \ref{main}}\label{bevis}

The structure of this proof is rather similar to the structure of the proof
of Proposition~5.4 in \cite{ASWY}.

We start by making a Hironaka resolution of singularities, \cite{Hiro}, of $Z$ such that the pre-image of
$\cup_j\{f_j=0\}$ has normal crossings. We then make further toric resolutions (e.g., as in \cite{PTY}) 
such that, in local charts, the pullback of each $f_i$ is a monomial, $x^{\alpha_i}$, times a 
non-vanishing holomorphic tuple. One checks that the pullback of $P_j^{\epsilon}$ is of one of the following
forms: 
\begin{equation*}
\frac{\chi(|x^{\tilde{\alpha}}|^2\xi/\epsilon)}{x^{\alpha}}\, \vartheta, \quad
1-\chi(|x^{\tilde{\alpha}}|^2\xi/\epsilon),\quad
\frac{\debar \chi(|x^{\tilde{\alpha}}|^2\xi/\epsilon)}{x^{\alpha}}\wedge \vartheta,
\end{equation*}
where $\xi$ is smooth and positive, $\supp \tilde{\alpha}= \supp \alpha$,
and $\vartheta$ is a smooth bundle valued form; by localizing on the blow-up we may also suppose that 
$\vartheta$ has as small support as we wish. If the $\chi$-functions are smooth, the following special
case of Theorem \ref{main} now immediately follows from Lemma \ref{pmmultlemma}:
\begin{equation}\label{eq1}
(UR)^{\nu}=\lim_{\epsilon_q\to 0}\cdots \lim_{\epsilon_1\to 0}P^{\epsilon_q}_{\nu(q)}\wedge \cdots \wedge 
P^{\epsilon_1}_{\nu(1)}.
\end{equation} 

\bigskip

For smooth $\chi$-functions we put
\begin{equation*}
\mathcal{I}(\epsilon)=
\int \frac{\debar \chi_1^{\epsilon}\wedge \cdots \wedge \debar \chi_p^{\epsilon}
\chi_{p+1}^{\epsilon}\cdots \chi_q^{\epsilon}}{x^{\alpha_1+\cdots +\alpha_p+\cdots +\alpha_{q'}}}\wedge \varphi,
\end{equation*}
where $q'\leq q$, $\varphi$ is a smooth $(n,n-p)$-form with support close to the origin, and 
$\chi_j^{\epsilon}=\chi (|x^{\tilde{\alpha_j}}|^2\xi_j/\epsilon_j)$ for smooth positive $\xi_j$.
We note that we may replace the $\debar$ in $\mathcal{I}(\epsilon)$ by $d$ for bidegree reasons.
In case $\chi={\bf 1}_{[1,\infty)}$ we denote the corresponding integral by $I(\epsilon)$. 
We also put $\mathcal{I}^{\nu}(\epsilon_1,\ldots,\epsilon_q)=\mathcal{I}(\epsilon_{\nu(1)},\ldots,\epsilon_{\nu(q)})$
and similarly for $I^{\nu}$. In view of \eqref{eq1}, the special case of Theorem \ref{main}
when the $\chi$-functions are smooth will be proved if we can show that
\begin{equation}\label{eq2}
\lim_{\epsilon_1 \ll \cdots \ll \epsilon_q \to 0}\mathcal{I}^{\nu}(\epsilon)
\end{equation}
exists. 
The case with $\chi={\bf 1}_{[1,\infty)}$ will then follow if we can show
\begin{equation}\label{eq3}
\lim_{\delta\to 0} (\mathcal{I}^{\nu}(\epsilon(\delta))-I^{\nu}(\epsilon(\delta)))=0,
\end{equation}
where $\delta \mapsto \epsilon(\delta)$ is any admissible path.

For notational convenience, we will consider $\mathcal{I}^\nu(\epsilon)$ (unless otherwise stated), but our 
arguments apply just as well to $I^\nu(\epsilon)$ until we arrive at the integral \eqref{Iepsilon}.

Denote by $\tilde{A}$ the $q\times n$-matrix with rows $\tilde{\alpha}_i$. 
We will first show that we can assume that $\tilde{A}$ has full rank. The idea is the same as in \cite{CH} and \cite{PCrelle},
however because of the paths along which our limits are taken, we have to modify the argument slightly.
The following lemma follows from the proof of Lemma III.12.1 in \cite{TsikhBook}.
\begin{lemma}\label{ranklemma}
Assume that $\alpha$ is a $q \times n$-matrix with rows $\alpha_i$ such that there exists $(v_1,\dots,v_q) \neq 0$ with $\sum v_i\alpha_i = 0$.
Let $j = \min \{ i; v_i \neq 0 \}$. Then there exist constants $C,c > 0$ such that if $\epsilon_{j} < C(\epsilon_{j+1}\dots\epsilon_q)^c$,
then $\chi(|x^{\alpha_j}|^2\xi_j/\epsilon_j) \equiv 1$ and $\debar\chi(|x^{\alpha_j}|^2\xi_j/\epsilon_j) \equiv 0$ for
all $x \in \Delta \cap \{ |x^{\alpha_i}|^2 \geq C_i\epsilon_i, i = j+1,\dots,q \}$, where $\Delta$ is the unit polydisc.
\end{lemma}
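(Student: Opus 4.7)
The plan is to use the dependence relation $\sum v_i\alpha_i=0$ to extract a sharp lower bound for $|x^{\alpha_j}|^2$ on the indicated set, large enough compared to $\epsilon_j$ to push the argument of $\chi$ into the region where $\chi\equiv 1$ (and hence $\debar\chi\equiv 0$).

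Since $v_i=0$ for $i<j$ by minimality, and since $(v_1,\ldots,v_q)$ can be replaced by its negative without changing the hypothesis, I may assume $v_j>0$. The relation then becomes $v_j\alpha_j=-\sum_{i>j}v_i\alpha_i$, and raising $|x|$ to these componentwise powers gives
\begin{equation*}
|x^{\alpha_j}|^{2v_j} \;=\; \prod_{i>j}|x^{\alpha_i}|^{-2v_i}.
\end{equation*}
The key step is to split this product by the sign of $v_i$. Because $x\in\Delta$ forces $|x^{\alpha_i}|\leq 1$, every factor with $v_i>0$ has a negative exponent applied to a number $\leq 1$ and is therefore itself $\geq 1$; such factors can simply be discarded. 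Each factor with $v_i<0$ uses the hypothesis $|x^{\alpha_i}|^2\geq C_i\epsilon_i$ and contributes $(C_i\epsilon_i)^{-v_i}$ with $-v_i>0$. Taking a $v_j$-th root, and using that $\xi_j$ is smooth and strictly positive, hence bounded below by some $\xi_{\min}>0$ on the compact piece under consideration, produces an estimate
\begin{equation*}
|x^{\alpha_j}|^2\xi_j/\epsilon_j \;\geq\; (\xi_{\min}\tilde C/\epsilon_j)\,(\epsilon_{j+1}\cdots\epsilon_q)^{c}
\end{equation*}
for positive constants $\tilde C$ and $c$ depending only on $v_1,\ldots,v_q$ and the $C_i$.

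To conclude, recall that $\chi\equiv 1$ on $[N_0,\infty)$ for some $N_0\geq 1$ ($N_0=1$ for the indicator, $N_0>1$ for a smooth approximation). Choosing $C$ small enough depending on $\xi_{\min},\tilde C$ and $N_0$, the hypothesis $\epsilon_j<C(\epsilon_{j+1}\cdots\epsilon_q)^{c}$ forces the right-hand side above to exceed $N_0$ throughout the set, so $\chi(|x^{\alpha_j}|^2\xi_j/\epsilon_j)\equiv 1$ and $\debar\chi(|x^{\alpha_j}|^2\xi_j/\epsilon_j)\equiv 0$ there. The only genuinely nontrivial point is the sign bookkeeping for the coefficients $v_i$; once one notices that the polydisc constraint $|x^{\alpha_i}|\leq 1$ makes the wrong-sign factors harmless, the argument collapses to elementary monomial arithmetic.
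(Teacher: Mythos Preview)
Your argument is correct and is essentially the standard one: the paper does not supply its own proof but cites Lemma~III.12.1 in Tsikh's book, and your monomial-arithmetic derivation from the linear dependence $\sum v_i\alpha_i=0$ is exactly what that reference does. One small point you left implicit is the passage from the partial product $\prod_{i>j,\,v_i<0}\epsilon_i^{-v_i/v_j}$ to the full product $(\epsilon_{j+1}\cdots\epsilon_q)^c$; this uses that the $\epsilon_i$ may be assumed $\leq 1$, so inserting the missing factors only decreases the bound.
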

Assume that $\tilde{A}$ does not have full rank, and let $v$ be a column vector such that $v^t \tilde{A} = 0$. Since
$(\epsilon_1,\dots,\epsilon_q)$ is replaced by $(\epsilon_{\nu(1)},\dots,\epsilon_{\nu(q)})$ in $\mathcal{I}^\nu(\epsilon)$,
we choose instead $j_0$ such that $\nu(j_0) \leq \nu(i)$ for all $i$ such that $v_i \neq 0$.
If $j_0 \leq p$, we let $\widetilde{\mathcal{I}}^\nu(\epsilon) = 0$, and if $j_0 \geq p+1$, we let $\widetilde{\mathcal{I}}^\nu(\epsilon)$
be $\mathcal{I}^\nu(\epsilon)$ but with $\chi_{j_0}^\epsilon$ replaced by $1$.
If $\epsilon = \epsilon(\delta)$ is such that $\epsilon_{\nu(j_0)} > 0$, then $\mathcal{I}^\nu(\epsilon)$ is a current
acting on a test form with support on a set of the form
\begin{equation*}
    \Delta \cap \{ |x^{\alpha_i}|^2 \geq C_i\epsilon_{\nu(i)}; \text{for all $i$ such that } \nu(i) \geq \nu(j_0) \}.
\end{equation*}
In particular, if $\epsilon_{\nu(j_0)}(\delta)$ is sufficiently small compared to $(\epsilon_{\nu(j_0)+1}(\delta),\dots,$
$\epsilon_q(\delta))$,
then by Lemma \ref{ranklemma}, if $j_0 \leq p$, the factor $\debar\chi_{j_0}^\epsilon$ is identically $0$, and if $j_0 \geq p+1$,
the factor $\chi_{j_0}^\epsilon$ is identically $1$
and thus is equal to $\widetilde{\mathcal{I}}^\nu(\epsilon)$ for such $\epsilon$.
Similarly, if $\epsilon_{\nu(j_0)} = 0$, we have that $\mathcal{I}^\nu(\epsilon)$ is defined as a limit along $\epsilon_{\nu(j_0)} \to 0$,
with $\epsilon_{\nu(j_0)+1},\dots,\epsilon_q$ fixed and in the limit we get again that for sufficiently small $\epsilon_{\nu(j_0)}$,
we can replace $\mathcal{I}^\nu(\epsilon)$ by $\widetilde{\mathcal{I}}^\nu(\epsilon)$.
Thus we have
\begin{equation*}
\lim_{\epsilon_1 \ll \dots \ll \epsilon_q \to 0} \mathcal{I}^\nu(\epsilon) = 
\lim_{\epsilon_1 \ll \dots \ll \epsilon_q \to 0} \widetilde{\mathcal{I}}^\nu(\epsilon),
\end{equation*}
and we have reduced to the case that $\tilde{A}$ is a $(q-1)\times n$-matrix of the same rank. We continue
this procedure until $\tilde{A}$ has full rank.

\smallskip

By re-numbering the coordinates, we may suppose that the minor
$A=(\tilde{\alpha}_{ij})_{1\leq i,j\leq q}$ of $\tilde{A}$ is invertible and we put $A^{-1}=B=(b_{ij})$. 
We now use complex notation to 
make a non-holomorphic, but smooth change of variables:
\begin{equation}\label{varbyte}
y_1=x_1\,\xi^{b_1/2},\ldots, y_q=x_q\,\xi^{b_q/2}, y_{q+1}=x_{q+1},\ldots, y_n=x_n,
\end{equation}
\begin{equation*}
\hspace{.7cm} \bar{y}_1=\bar{x}_1\,\xi^{b_1/2},\ldots, \bar{y}_q=\bar{x}_q\,\xi^{b_q/2}, 
\bar{y}_{q+1}=\bar{x}_{q+1},\ldots, \bar{y}_n=\bar{x}_n,
\end{equation*}
where $\xi^{b_i/2}=\xi_1^{b_{i1}/2}\cdots \xi_q^{b_{iq}/2}$. One easily checks that 
$dy\wedge d\bar{y}=\xi^{b_1}\cdots \xi^{b_q}\,$ $dx\wedge d\bar{x}+ O(|x|)$, so \eqref{varbyte} defines a
smooth change of variables between neighborhoods of the origin. 
A simple linear algebra computation then shows that
$|x^{\tilde{\alpha_i}}|^2\xi_i=|y^{\tilde{\alpha_i}}|^2$. Of course, this change of variables does not preserve 
bidegrees so $\varphi(y)$ is merely a smooth compactly supported $(2n-p)$-form.
We thus have
\begin{equation}\label{I1(y)}
\mathcal{I}^\nu(\epsilon)=
\int_{\Delta} \frac{d\chi_1^{\epsilon}\wedge \cdots \wedge d \chi_p^{\epsilon}
\chi_{p+1}^{\epsilon}\cdots \chi_q^{\epsilon}}{y^{\alpha_1+\cdots +\alpha_p+\cdots +\alpha_{q'}}}\wedge \varphi'(y),
\end{equation}
where $\chi_j^{\epsilon}=\chi(|y^{\tilde{\alpha}_j}|^2/\epsilon_{\nu(j)})$ and 
$\varphi'(y)=\sum_{|I|+|J|=2n-p}\psi_{IJ}dy_I\wedge d\bar{y}_J$. By linearity we may assume that the sum only consists
of one term $\varphi'(y)=\psi dy_K\wedge d\bar{y}_L$, and by scaling, we may assume that 
$\supp \psi \subseteq \Delta$, $\Delta$ being the unit polydisc.
By Lemma 2.4 in \cite{CH}, we can write the function $\psi$ as
\begin{equation}\label{taylor}
\psi(y)=\sum_{I+J<\sum_1^{q'}\alpha_j-{\bf 1}}\psi_{IJ}\, y^I\bar{y}^J +
\sum_{I+J=\sum_1^{q'}\alpha_j-{\bf 1}}\psi_{IJ}\, y^I\bar{y}^J,
\end{equation}
where $a<b$ for tuples $a$ and $b$ means that $a_i<b_i$ for all $i$. In the decomposition \eqref{taylor}
each of the smooth functions $\psi_{IJ}$ in the first sum on the left-hand side 
is independent of some variable. We now show that this implies that the first sum on the left-hand side of
\eqref{taylor} does not contribute to the integral \eqref{I1(y)}. In case $\varphi'(y)$ has bidegree $(n,n-p)$
this is a well-known fact but we must show it for an arbitrary $(2n-p)$-form.

We change to polar coordinates:
\begin{equation*}
dy_K\wedge d\bar{y}_L=d(r_{K_1}e^{i\theta_{K_1}})\wedge \cdots \wedge d(r_{L_1}e^{-i\theta_{L_1}})\wedge \cdots
\end{equation*}
Since $\chi_j^{\epsilon}$ in \eqref{I1(y)} is independent of $\theta$, it follows that we must have full
degree $=n$ in $d\theta$. The only terms in the expansion of $dy_K\wedge d\bar{y}_L$ above that will contribute
to \eqref{I1(y)} are therefore of the form
\begin{equation*}
c\, r_1\cdots r_ne^{i\theta \cdot \gamma}\, dr_M\wedge d\theta,
\end{equation*}
where $|M|=n-p$, $c$ is a constant, and $\gamma$ is a multiindex with entries equal to $1$, $-1$, or $0$.
Substituting this and a term $\psi_{IJ}y^I\bar{y}^J=\psi_{IJ}r^{I+J}e^{i\theta \cdot (I-J)}$ from \eqref{taylor}
into \eqref{I1(y)} gives rise to an ``inner'' $\theta$-integral (by Fubini's theorem):
\begin{equation*}
\mathscr{J}_{IJ}(r)=
\int_{\theta \in [0,2\pi)^n}\psi_{IJ}(r,\theta) \ e^{i\theta \cdot (I-J-\sum_1^{q'}\alpha_j+\gamma)}\, d\theta.
\end{equation*}
If $I+J<\sum_1^{q'}\alpha_j-{\bf 1}$, then $I-J-\sum_1^{q'}\alpha_j+\gamma <0$ and $\psi_{IJ}$ is independent 
of some $y_j=r_je^{i\theta_j}$. Integrating over $\theta_j\in [0,2\pi)$ thus yields $\mathscr{J}_{IJ}=0$
if $I+J<\sum_1^{q'}\alpha_j-{\bf 1}$. If instead $I+J=\sum_1^{q'}\alpha_j-{\bf 1}$, then $\mathscr{J}_{IJ}(r)$
is smooth on $[0,\infty)^n$.

Summing up, we see that we can write \eqref{I1(y)} as
\begin{equation} \label{Iepsilon}
\mathcal{I}^\nu(\epsilon)=
\int_{r\in (0,1)^n} d\chi_1^{\epsilon}\wedge \cdots \wedge d \chi_p^{\epsilon}
\chi_{p+1}^{\epsilon}\cdots \chi_q^{\epsilon}\, \mathscr{J}(r)\, dr_M,
\end{equation}
where $\chi_j^{\epsilon}=\chi(r^{2\alpha_j}/\epsilon_{\nu(j)})$, $\mathscr{J}$ is smooth, and $|M|=n-p$.

After these reductions, the integral \eqref{Iepsilon} we arrive at is the same as equation (16) in \cite{PCrelle},
and we will use the fact proven there, that $\lim_{\delta \to 0} \mathcal{I}^\nu(\epsilon(\delta))$ exists
along any admissible path $\epsilon(\delta)$, and is well-defined independently of the choice of
admissible path. (This is not exactly what is proven there, but the fact that if $b \in \Q^p$,
then $\lim_{\delta \to 0} \epsilon(\delta)^b$ is either $0$ or $\infty$ independently of the
admissible path chosen is the only addition we need to make for the argument to go through in our case.)
Using this, if we let $\epsilon(\delta)$ be any admissible path, we will show by induction over $q$ that
\begin{equation*}
    \lim_{\epsilon_1 \ll \dots \ll \epsilon_q \to 0 } \mathcal{I}^\nu(\epsilon) = \lim_{\delta \to 0} \mathcal{I}^\nu(\epsilon(\delta)).
\end{equation*}
For $q = 1$ this is trivially true, so we assume $q > 1$. Let $\epsilon^k$ be any sequence satisfying the conditions
in Definition~\ref{limitdef}.
Consider a fixed $k$, and let $m$ be such that $\epsilon^k = (0,\dots,0,\epsilon^k_{m+1},\dots,\epsilon^k_{q})$ with $\epsilon_{m+1}^k > 0$.
Let $I_1 = \nu^{-1}(\{1,\dots,m\})\cap\{1,\dots,p\}$ and $I_2 = \nu^{-1}(\{1,\dots,m\})\cap\{p+1,\dots,q\}$.
We consider $\epsilon^k_{m+1},\dots,\epsilon^k_{q}$ fixed in $\mathcal{I}^\nu(\epsilon)$, and define
\begin{equation*}
    \mathcal{I}_k(\epsilon_1,\dots,\epsilon_m) = \int_{[0,1]^n} \bigwedge_{i \in I_1} d\chi(r^{\alpha_i}/\epsilon_{\nu(i)})
    \prod_{i \in I_2} \chi(r^{\alpha_i}/\epsilon_{\nu(i)})\mathscr{J}_k(r)dr_M,
\end{equation*}
originally defined on $(0,\infty)^p$, but extended according to Definition \ref{limitdef}, where
\begin{equation*}
    \mathscr{J}_k(r) = \pm \bigwedge_{i \in \{ 1,\dots,p \} \setminus I_1} d\chi(r^{\alpha_i}/\epsilon^k_{\nu(i)})
    \prod_{i \in \{ p+1,\dots,q\} \setminus I_2} \chi(r^{\alpha_i}/\epsilon^k_{\nu(i)})\mathscr{J}(r)
\end{equation*}
(where the sign is chosen such that $\mathcal{I}_k(0) = \mathcal{I}^\nu(\epsilon^k)$).
Since $m < q$ and $\mathscr{J}_k$ is smooth, we have by induction that
\begin{equation*}
    \mathcal{I}_k(0) = \lim_{\epsilon_m \to 0} \dots \lim_{\epsilon_1 \to 0} \mathcal{I}_k(\epsilon_1,\dots,\epsilon_m) =
    \lim_{\delta \to 0} \mathcal{I}_k(\epsilon'(\delta)),
\end{equation*}
where $\epsilon'(\delta)$ is any admissible path, and the first equality follows by definition of $\mathcal{I}_k(0)$.
We fix an admissible path $\epsilon'(\delta)$. For each $k$ we can choose $\delta_k$ such that
if $\epsilon^{k'} = (\epsilon'_1(\delta_k),\dots,\epsilon'_m(\delta_k))$, then
$\lim_{k \to \infty} (\mathcal{I}_k(\epsilon^{k'}) - \mathcal{I}_k(0)) = 0$ and
if $\tilde{\epsilon}^k = (\epsilon^{k'},\epsilon^k_{m+1},\dots,\epsilon^k_{q})$, then $\tilde{\epsilon}^k$ forms
a subsequence of an admissible path.
Since $\mathcal{I}_k(0) = \mathcal{I}^\nu(\epsilon^k)$, and $\mathcal{I}_k(\epsilon^{k'}) = \mathcal{I}^\nu(\tilde{\epsilon}^k)$, we thus have
\begin{equation*}
    \lim_{k \to \infty} \mathcal{I}^\nu(\epsilon^k) = \lim_{k \to \infty} \mathcal{I}^\nu(\tilde{\epsilon}^k) =
    \lim_{\delta \to 0} \mathcal{I}^\nu(\epsilon(\delta))
\end{equation*}
where the second equality follows from the existence and uniqueness of $\mathcal{I}^\nu(\epsilon(\delta))$
along any admissible path.
Hence we have shown that the limit in \eqref{eq2} exists and is well-defined.

Finally, if we start from \eqref{Iepsilon}, as (23) in \cite{PCrelle} shows, either
\begin{equation*}
    \lim_{\epsilon_1 \ll \dots \ll \epsilon_q \to 0} \mathcal{I}^\nu(\epsilon) = \pm \int_{r_M \in (0,1)^{n-p}} 
    \mathscr{J}(0,r_M) dr_M,
\end{equation*}
or the limit is $0$, depending only on $\alpha$. If we consider $I^{\nu}(\epsilon)$ instead, we get the same limit,
see \cite[p. 79--80]{TsikhBook}, and \eqref{eq3} follows.

\section{Proof of Theorems \ref{epsilon-main} and \ref{lambda-main}}\label{bevis2}
As in \cite{HasamArkiv} and \cite{JebHs} the key-step of the proof is a Whitney type division lemma, 
Lemma~\ref{divlemma} below.
Recall that 
\begin{equation*}
(P_1,\ldots,P_q)=(R^1_{k_1},\ldots,R^p_{k_p},U^{p+1}_{k_{p+1}},\ldots,U^q_{k_q})
\end{equation*}
and that 
$P_j^{\epsilon_j}$ and $P_j^{\lambda_j}$ are the $\epsilon$-regularizations with smooth $\chi$ 
(given by \eqref{Uepsilon}, \eqref{Repsilon}) 
and the $\lambda$-regularizations (cf., \eqref{Udef}, \eqref{residydef}) respectively of $P_j$.  
We will consider the following two integrals:
\begin{equation*}
\mathcal{I}(\epsilon)=\int_Z
P_1^{\epsilon_1}\wedge \cdots \wedge P_q^{\epsilon_q}\wedge \varphi
\end{equation*}
\begin{equation*}
\Gamma(\lambda) =\int_Z P_1^{\lambda_1}\wedge \cdots \wedge P_q^{\lambda_q}\wedge \varphi,  
\end{equation*}
where $\varphi$
is a test form on $Z$, supported close to a point in $\{f_1=\cdots=f_q=0\}$, 
of bidegree $(n,n-k_1-\cdots- k_q +q-p)$ with values in $\Lambda (E_1^*\oplus \cdots \oplus E_q^*)$. 
In the arguments below, we will assume for notational convenience that $\tilde{f}_j=f_j$ 
(cf., e.g., \eqref{Udef}); the modifications to the general case are straightforward.

The main parts of the proofs of Theorems \ref{epsilon-main} and \ref{lambda-main}
are contained in the following propositions.

\begin{proposition}\label{epsilonpropp}
Assume that $f_1,\ldots,f_q$ define a complete intersection. For $p<s\leq q$ we have
\begin{equation*}
\big| \mathcal{I}(\epsilon)-\mathcal{I}(\epsilon_1,\ldots,\epsilon_{s-1},0,\ldots,0)\big|\leq
C\|\varphi\|_M (\epsilon_{s}^{\omega}+\cdots +\epsilon_q^{\omega}).
\end{equation*}
Note that $\mathcal{I}(\epsilon_1,\ldots,\epsilon_{s-1},0,\ldots,0)$ is well-defined; it is the action 
of $U^s_{k_s}\wedge \cdots \wedge U^q_{k_q}$ on a smooth form. 
\end{proposition}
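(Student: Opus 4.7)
The strategy is a telescoping argument combined with a resolution of singularities. Write
\begin{equation*}
    \mathcal{I}(\epsilon)-\mathcal{I}(\epsilon_1,\ldots,\epsilon_{s-1},0,\ldots,0)
    =\sum_{j=s}^{q}\bigl[\mathcal{I}(\epsilon_1,\ldots,\epsilon_{j},0,\ldots,0)
    -\mathcal{I}(\epsilon_1,\ldots,\epsilon_{j-1},0,\ldots,0)\bigr],
\end{equation*}
so it suffices to bound each summand by $C\|\varphi\|_M\,\epsilon_j^\omega$. Since $s>p$, all of $P_j,\ldots,P_q$ are of $U$-type, and under the complete intersection hypothesis these $U$-currents anti-commute and wedge freely with any smooth factor, so the $j$-th summand makes sense as the action of the residual current $U^{j+1}_{k_{j+1}}\wedge\cdots\wedge U^q_{k_q}$ on the smooth form $P_1^{\epsilon_1}\wedge\cdots\wedge P_{j-1}^{\epsilon_{j-1}}\wedge(P_j^{\epsilon_j}-U^j_{k_j})\wedge\varphi$, up to signs.

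Apply Hironaka together with the toric resolutions from Section \ref{bevis} to localize and arrange that $f_i=x^{\alpha_i}\tilde f_i$ with $\tilde f_i$ non-vanishing. Then $P_j^{\epsilon_j}$ pulls back to a sum of terms of the shape $\chi(|x^{\tilde\alpha_j}|^2\xi_j/\epsilon_j)\,x^{-k_j\alpha_j}\omega_j$ with $\omega_j$ smooth and bundle-valued, and the factor $P_j^{\epsilon_j}-U^j_{k_j}$ becomes
\begin{equation*}
    \bigl(\chi(|x^{\tilde\alpha_j}|^2\xi_j/\epsilon_j)-1\bigr)\,x^{-k_j\alpha_j}\omega_j,
\end{equation*}
whose support lies in $\{|x^{\tilde\alpha_j}|^2\xi_j\lesssim\epsilon_j\}$. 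The remaining factors $P_i^{\epsilon_i}$ with $i<j$ pull back similarly, and the resulting integral is thus (up to smooth bounded pieces) an integral of a monomial against the cutoff.

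The crucial ingredient is the forthcoming Lemma \ref{divlemma}, which invokes the complete intersection assumption to force the exponents $\alpha_1,\ldots,\alpha_q$ on each chart to satisfy a linear independence/divisibility condition. Granted this, the $R$-type factors $P_i^{\epsilon_i}$ ($i\leq p$) can be rewritten so that their combined interaction with the singular factor $x^{-k_j\alpha_j}$ produces no additional divergence on $\{|x^{\tilde\alpha_j}|^2\lesssim\epsilon_j\}$, and the $j$-th summand is pointwise bounded by
\begin{equation*}
    C\|\varphi\|_M\,\|\chi\|_{C^M}\,\mathbf{1}_{\{|x^{\tilde\alpha_j}|^2\xi_j\lesssim\epsilon_j\}}\,|x^{\alpha_j}|^{-2k_j}\,G(x),
\end{equation*}
with $G$ a bounded smooth function together with locally integrable monomial factors in the complementary variables. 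A direct polar-coordinate integration along the fibre-monomial $|x^{\alpha_j}|$ then yields the bound $C\|\varphi\|_M\,\|\chi\|_{C^M}\,\epsilon_j^\omega$ for some $\omega>0$ determined only by the exponents, and hence only by $f$, $Z$, and $\supp\varphi$. The $\|\chi\|_{C^M}$ factor, which appears through the derivatives of the other regularizations $P_i^{\epsilon_i}$, accounts for the stated dependence of $C$ on the $C^M$-norm of the $\chi$-functions.

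The main obstacle is the proof of Lemma \ref{divlemma}. Without the complete intersection hypothesis, the residue factors $P_i^{\epsilon_i}$ for $i\leq p$ can accumulate extra singular behavior on $\{|x^{\tilde\alpha_j}|^2\lesssim\epsilon_j\}$ that destroys any polynomial rate in $\epsilon_j$; complete intersection is precisely what provides a divisibility of the monomial exponents that converts those potential singularities into integrable ones. Apart from this, everything else reduces to the bookkeeping already developed for the proof of Theorem \ref{main} in Section \ref{bevis}.
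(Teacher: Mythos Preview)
Your telescoping reduction is reasonable, and the paper's argument could in principle be organized that way, but the heart of the matter is how the complete intersection hypothesis is actually used, and here your proposal has a genuine gap.

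You describe Lemma~\ref{divlemma} as forcing ``the exponents $\alpha_1,\ldots,\alpha_q$ on each chart to satisfy a linear independence/divisibility condition''. That is not what the lemma does. After resolution the integrand carries a smooth form $\Phi$ (built from the $v^j$, the $\debar\xi_j/\xi_j$, the $d\bar h_j/\bar h_j$, and $\hat\varphi_1$) multiplied by the singular factor $\prod_j 1/\hat f_{j1}^{k_j}$ and by $\bigwedge d\bar x^{\alpha_j}/\bar x^{\alpha_j}$. Lemma~\ref{divlemma} says you may replace $\Phi$ by a Taylor-corrected $\Phi'$ so that $\Phi'\wedge\bigwedge_{i\in I} d\bar x_i/\bar x_i$ is $C^r$ for every $I\subseteq\mathcal K$, where $\mathcal K$ indexes the coordinates dividing some $x^{\alpha_j}$ with $j>p$. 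The reason this works is a bidegree count: on the base, the form $F$ assembled from the $L$-indexed factors has anti-holomorphic degree exceeding the dimension of $\bigcap_{j\in L^c}\{f_j=0\}$ (this is exactly where complete intersection enters), so its pullback to $\{x_I=0\}$ vanishes; hence the relevant Taylor jets of $\Phi$ vanish there. There is no statement about the $\alpha_j$ themselves.

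Your pointwise bound $\mathbf 1_{\{|x^{\tilde\alpha_j}|^2\lesssim\epsilon_j\}}|x^{\alpha_j}|^{-2k_j}G(x)$ with $G$ ``bounded smooth together with locally integrable monomial factors'' is where the argument breaks. Without the Taylor-jet cancellation of Lemma~\ref{divlemma}, the integrand on the support of $\chi_j^\epsilon-1$ still carries \emph{all} the poles $\prod_i 1/x^{k_i\alpha_i}$, and the $\alpha_i$ for $i\le p$ can share support with $\alpha_j$; the resulting singularity on $\{|x^{\tilde\alpha_j}|^2\lesssim\epsilon_j\}$ is in general not integrable, and no polar-coordinate integration over a single monomial fibre will produce $\epsilon_j^\omega$. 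The lemma is precisely what converts those poles into something $C^r$ along the $\mathcal K$-directions.

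For comparison, the paper does not telescope. After resolution it writes the integral in the form \eqref{eq5}, applies Lemma~\ref{divlemma} to pass to $\Phi'$, splits the differential $d=d_{\mathcal K}+d_{\mathcal K^c}$ to arrive at \eqref{eq6}, then performs the non-holomorphic change of variables \eqref{varbyte} (which kills the metric factors $\xi_j$ in the first $\nu$ arguments of $\chi$) to reach \eqref{eq7}; this is exactly equation~(24) of \cite{JebHs}, and the H\"older-type estimate in $\epsilon$ is then quoted from the proof of Proposition~8 there. If you want to keep your telescoping framework, you will still need these two ingredients---the Taylor-jet vanishing from the bidegree argument, and the change of variables reducing to the model integral in \cite{JebHs}---to close each summand.
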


\begin{proposition}\label{lambdapropp}
Assume that $f_1,\ldots,f_q$ define a complete intersection.
Then $\Gamma(\lambda)$ has a meromorphic continuation to all of $\C^q$ and its only possible poles in a neighborhood 
of $\cap_1^q\{\mathfrak{Re}\, \lambda_j\geq 0\}$ are along hyperplanes of the form
$\sum_{j=1}^p\lambda_j \alpha_j=0$,
where $\alpha_j\in \mathbb{N}$ and at least two $\alpha_j$ are positive.
In particular, for $p=1$, $\Gamma(\lambda)$ is analytic in 
a neighborhood of $\cap_{1}^q\{\mathfrak{Re}\, \lambda_j\geq 0\}$.
\end{proposition}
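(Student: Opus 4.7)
The plan is to adapt the argument for Theorem~1 in \cite{HasamArkiv}, which handles the pure residue case $p=q$, so as to accommodate the principal value factors $U^j_{k_j}$ for $j>p$. First I would perform a Hironaka resolution $\pi \colon \widetilde{Z} \to Z$ together with further toric modifications so that, in each chart, the pullback of every $f_j$ is $x^{\alpha_j}$ times a non-vanishing holomorphic tuple. After this reduction, the pulled-back integrand becomes a finite sum of model integrals of the shape
$$\int \prod_{j=p+1}^{q}|x^{\alpha_j}|^{2\lambda_j}\,\bigwedge_{j=1}^{p}\frac{\debar |x^{\alpha_j}|^{2\lambda_j}}{x^{\beta_j}} \wedge \frac{\psi(x)}{x^{\mu}},$$
where $\psi$ is smooth and compactly supported, and the multi-indices $\beta_j$, $\mu$ have supports contained in those of the corresponding $\alpha_j$.

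Next, the complete intersection hypothesis would enter through Lemma~\ref{divlemma}: chart by chart, it yields the rank/divisibility property of the exponents $\alpha_1,\ldots,\alpha_p$ needed to rule out configurations in which only one residue exponent $\alpha_j$ is active in a given coordinate. In such ``good'' configurations, the scalar factor $\lambda_j$ arising from $\debar |x^{\alpha_j}|^{2\lambda_j}$ cancels the single-variable principal value pole coming from $1/x^{\beta_j}$, leaving only pole contributions that depend on at least two of $\lambda_1,\ldots,\lambda_p$ simultaneously.

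The computation of the model integral itself proceeds by passing to polar coordinates $x_i = r_i e^{i\theta_i}$; the angular integration is nontrivial only for terms of total $\theta$-degree $n$, and the remaining radial integration factors, via Fubini, into a finite sum of Beta-type one-variable integrals. This gives a meromorphic continuation to all of $\C^q$ with poles along real hyperplanes $\sum_{j=1}^{q} c_j \lambda_j + m = 0$, $c_j \in \mathbb{N}$, $m \in \mathbb{Z}_{\ge 0}$. The factors $|x^{\alpha_j}|^{2\lambda_j}$ for $j > p$ contribute only nonnegative terms to the radial exponent, so they never force a pole in a neighborhood of $\cap_1^q\{\mathfrak{Re}\,\lambda_j \ge 0\}$; combined with Lemma~\ref{divlemma}, this confines the possible poles near the closed right half-space exactly to the hyperplanes $\sum_{j=1}^{p} \alpha_j \lambda_j = 0$.

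The main obstacle will be the careful invocation of Lemma~\ref{divlemma}: one must check that the global complete intersection hypothesis really does survive the resolution and translate into the chart-wise combinatorial statement ruling out poles with only a single active $\lambda_j$ for $j \le p$. Granting this, the final assertion of the proposition is immediate, since for $p=1$ there is no hyperplane $\sum_{j=1}^{1}\alpha_j \lambda_j = 0$ with at least two positive coefficients, and so $\Gamma(\lambda)$ is automatically analytic in a neighborhood of $\cap_1^q\{\mathfrak{Re}\,\lambda_j \ge 0\}$.
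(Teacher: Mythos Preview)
Your overall outline—Hironaka resolution, reduction to monomial model integrals, Lemma~\ref{divlemma} as the place where the complete intersection hypothesis enters, and the $\lambda_1\cdots\lambda_p$ prefactor as the source of the ``at least two positive'' clause—matches the paper's proof. But you have misidentified \emph{what} Lemma~\ref{divlemma} actually does, and your argument for excluding $\lambda_{p+1},\ldots,\lambda_q$ from the polar hyperplanes is incomplete as stated.

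In the paper, Lemma~\ref{divlemma} is not a rank statement about the residue exponents $\alpha_1,\ldots,\alpha_p$. It concerns the set $\mathcal{K}=\{i:\ x_i\mid x^{\alpha_j}\text{ for some }j>p\}$ of \emph{principal value} coordinates: it shows that one may replace the form $\Phi$ by a Taylor-corrected $\Phi'$ so that $\Phi'\wedge\bigwedge_{i\in I}d\bar{x}_i/\bar{x}_i$ is $C^r$-smooth for every $I\subseteq\mathcal{K}$. After this subtraction one decomposes $d=d_{\mathcal{K}}+d_{\mathcal{K}^c}$ in the residue factors $d\bar{x}^{\alpha_j}/\bar{x}^{\alpha_j}$; every $d_{\mathcal{K}}$-piece is absorbed into a smooth form, so only differentials $d\bar{x}_i/\bar{x}_i$ with $i\in\mathcal{K}^c$ survive. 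These are precisely the coordinates with $\alpha_{ji}=0$ for all $j>p$, which is what forces the denominators $\sum_j\lambda_j\alpha_{ji}$ in \eqref{eq7'} to involve only $\lambda_1,\ldots,\lambda_p$.

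Your alternative justification, that ``$|x^{\alpha_j}|^{2\lambda_j}$ for $j>p$ contribute only nonnegative terms to the radial exponent,'' does not suffice. Consider a coordinate $x_i$ shared between a residue factor ($\alpha_{ji}>0$ for some $j\le p$) and a principal value factor ($\alpha_{j'i}>0$ for some $j'>p$). The expansion of $\bigwedge_{j\le p}\debar|x^{\alpha_j}|^{2\lambda_j}$ produces a term containing $d\bar{x}_i/\bar{x}_i$, and the resulting one-variable integral $\int |x_i|^{2a}x_i^{-c}\bar{x}_i^{-1}\psi$ with $a=\sum_j\lambda_j\alpha_{ji}$ has a pole at $a=0$, not merely at $a\le -1$. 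That pole sits on the boundary of the closed half-space and genuinely involves $\lambda_{j'}$. It is exactly Lemma~\ref{divlemma}, via the Taylor subtraction in the $\mathcal{K}$-variables, that kills these mixed contributions; the complete intersection hypothesis enters through the bidegree/dimension count in the proof of the lemma (the form $F$ has too high anti-holomorphic degree to survive restriction to $\{x_I=0\}$).

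Finally, the ``at least two $\alpha_j>0$'' conclusion is not a consequence of Lemma~\ref{divlemma} at all: once you are reduced to \eqref{eq7'}, a factor $\sum_{j\le p}\lambda_j\alpha_{ji}$ with only one nonzero $\alpha_{ji}$ is cancelled by the corresponding $\lambda_j$ in the numerator $\lambda_1\cdots\lambda_p$ (and $C_I=\det(\alpha_{ji})\neq 0$ forbids a repeated $\lambda_j$). So the two mechanisms you conflated are in fact separate: Lemma~\ref{divlemma} eliminates $\lambda_{p+1},\ldots,\lambda_q$ from the poles, and the elementary prefactor cancellation gives the ``at least two positive'' refinement.
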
 

Using that
\begin{equation}\label{nablalikhet}
\debar |f_j|^{2\lambda}\wedge u^j_k=\debar (|f_j|^{2\lambda}u^j_k)-f_j\cdot (|f_j|^{2\lambda}u^j_{k+1}),
\end{equation}
the proof of Theorem \ref{lambda-main} follows from Proposition \ref{lambdapropp} in a similar way as
Theorem 1 in \cite{HasamArkiv} follows from Proposition 4 in \cite{HasamArkiv}.

\bigskip

We indicate one way Proposition~\ref{epsilonpropp} can be used to prove Theorem~\ref{epsilon-main}.
To simplify notation somewhat, we let $R^j$ denote any $R^j_k$ and $R^j_{\epsilon}$ denotes a smooth 
$\epsilon$-regularization of $R^j$; $U^j$ and $U^j_{\epsilon}$ are defined similarly.
 The uniformity in the estimate of Proposition \ref{epsilonpropp} implies that
we have estimates of the form
\begin{equation}\label{foljd}
\left|\bigwedge_1^m R^j_{\epsilon}\wedge \bigwedge_{m+1}^p R^j \wedge \bigwedge_{p+1}^q U^j_{\epsilon}-
\bigwedge_1^m R^j_{\epsilon}\wedge \bigwedge_{m+1}^p R^j \wedge \bigwedge_{p+1}^q U^j\right|\lesssim 
(\epsilon_{p+1}^{\omega}+\cdots +\epsilon_q^{\omega}),
\end{equation}
where, e.g., $R^{m+1}\wedge \cdots \wedge R^p$ a priori is defined as a Coleff-Herrera product.
We prove (a slightly stronger result than) Theorem \ref{epsilon-main} by induction over $p$.
Let $R^*$ denote the Coleff-Herrera product of some $R^j$:s with $j>p$ and let $U^*$ and $U^*_{\epsilon}$
denote the product of some $U^j$:s and $U^j_{\epsilon}$:s respectively, also with $j>p$ but only $j$:s not
occurring in $R^*$. We prove
\begin{equation*}
\big|R^1_{\epsilon}\wedge \cdots \wedge R^p_{\epsilon}\wedge R^*\wedge U^*_{\epsilon}-
R^1\wedge \cdots \wedge R^p\wedge R^*\wedge U^*\big|\lesssim \epsilon^{\omega},
\end{equation*} 
i.e., we prove Theorem \ref{epsilon-main} {\em on} the current $R^*$.
The induction start, $p=0$, follows immediately from \eqref{foljd}. If we add and subtract
$R^1_\epsilon\wedge \dots \wedge R^p_\epsilon \wedge R^*\wedge U^*$, the induction step follows easily
from \eqref{nablalikhet} (construed in setting of $\epsilon$-regularizations) and estimates like \eqref{foljd}.

\begin{proof}[Proof of Propositions \ref{epsilonpropp} and \ref{lambdapropp}]
We may assume that $\varphi$ has arbitrarily small support. Hence, we may assume that $Z$ is an analytic subset
of a domain $\Omega\subseteq \C^N$ and that all bundles are trivial,
and thus make the identification $f_j=(f_{j1},\ldots,f_{je_j})$, where $f_{ji}$ are holomorphic in $\Omega$. 
We choose a Hironaka resolution $\hat{Z}\rightarrow Z$ such that the pulled-back ideals $\langle\hat{f}_j\rangle$
are all principal, and moreover, so that in a fixed chart with coordinates $x$
on $\hat{Z}$ (and after a possible re-numbering), 
$\langle\hat{f}_j\rangle$ is generated by $\hat{f}_{j1}$ and $\hat{f}_{j1}=x^{\alpha_j}h_j$, where $h_j$ is holomorphic 
and non-zero. We then have
\begin{equation*}
|\hat{f}_j|^2=|\hat{f}_{j1}|^2\xi_j, \quad \hat{u}^j_{k_j}=v^j/\hat{f}^{k_j}_{j1},
\end{equation*}
where $\xi_j$ is smooth and positive and $v^j$ is a smooth (bundle valued) form. We thus get
\begin{equation*}
\debar \chi_j(|\hat{f}_j|^2/\epsilon_j)=
\tilde{\chi}_j(|\hat{f}_j|^2/\epsilon_j)\left(\frac{d\bar{\hat{f}}_{j1}}{\bar{\hat{f}}_{j1}}+
\frac{\debar \xi_j}{\xi_j}\right),
\end{equation*}
where $\tilde{\chi}_j(t)=t \chi_j'(t)$, and 
\begin{equation*}
\debar |\hat{f}_j|^{2\lambda_j}=\lambda_j |\hat{f}_j|^{2\lambda_j} \left(\frac{d\bar{\hat{f}}_{j1}}{\bar{\hat{f}}_{j1}}+
\frac{\debar \xi_j}{\xi_j}\right).
\end{equation*}
It follows that $\mathcal{I}(\epsilon)$ and $\Gamma(\lambda)$ are finite sums of integrals which
we without loss of generality can assume to be of the form
\begin{equation}\label{eq4}
\pm \int_{\C^n_x}\prod_1^p \tilde{\chi}_j^{\epsilon} \prod_{p+1}^q\chi_j^{\epsilon}
\bigwedge_1^m \frac{d\bar{\hat{f}}_{j1}}{\bar{\hat{f}}_{j1}}\wedge \bigwedge_{m+1}^p
\frac{\debar\xi_j}{\xi_j}\wedge \bigwedge_1^q\frac{v^j}{\hat{f}^{k_j}_{j1}}\wedge \varphi \rho,
\end{equation}
\begin{equation}\label{eq4'}
\pm \lambda_1\cdots \lambda_p \int_{\C^n_x}
\prod_1^q |\hat{f}_j|^{2\lambda_j}
\bigwedge_1^m \frac{d\bar{\hat{f}}_{j1}}{\bar{\hat{f}}_{j1}}\wedge \bigwedge_{m+1}^p
\frac{\debar\xi_j}{\xi_j}\wedge \bigwedge_1^q\frac{v^j}{\hat{f}^{k_j}_{j1}}\wedge \varphi \rho,
\end{equation}
where $\rho$ is a cutoff function.

\smallskip

Recall that $\hat{f}_{j1}=x^{\alpha_j}h_j$ and let $\mu$ be the number of vectors in a maximal 
linearly independent subset of $\{\alpha_1,\ldots,\alpha_m\}$; say that 
$\alpha_1,\ldots,\alpha_{\mu}$ are linearly independent. We then can define new holomorphic coordinates
(still denoted by $x$) so that $\hat{f}_{j1}=x^{\alpha_j}$, $j=1,\ldots,\mu$, see \cite[p.~46]{PCrelle} for details.
Then we get
\begin{eqnarray}\label{hack}
\bigwedge_1^m d\hat{f}_{j1} &=& \bigwedge_1^{\mu}dx^{\alpha_j} \wedge
\bigwedge_{\mu+1}^m(x^{\alpha_j}dh_j+h_jdx^{\alpha_j}) \\
&=& x^{\sum_{\mu+1}^m \alpha_j}\bigwedge_1^{\mu}dx^{\alpha_j}\wedge \bigwedge_{\mu+1}^m dh_j, \nonumber
\end{eqnarray}
where the last equality follows because $dx^{\alpha_1}\wedge \cdots \wedge dx^{\alpha_{\mu}}\wedge dx^{\alpha_j}=0$,
$\mu+1\leq j \leq m$, since $\alpha_1,\ldots,\alpha_{\mu},\alpha_j$ are linearly dependent.
From the beginning we could also have assumed that $\varphi=\varphi_1\wedge \varphi_2$, where
$\varphi_1$ is an anti-holomorphic $(n-\sum_1^q k_j +q-p)$-form and $\varphi_2$ is a (bundle valued)
$(n,0)$-test form on $Z$. We now define
\begin{equation*}
\Phi=\bigwedge_{\mu+1}^m \frac{d\bar{h}_j}{\bar{h}_j}\wedge \bigwedge_{m+1}^p\frac{\debar \xi_j}{\xi_j}\wedge
\bigwedge_1^q v^j \wedge \hat{\varphi}_1.
\end{equation*}
Using \eqref{hack} we can now write \eqref{eq4} and \eqref{eq4'} as
\begin{equation}\label{eq5}
\pm \int_{\C^n_x}\frac{\prod_1^p \tilde{\chi}_j^{\epsilon} \prod_{p+1}^q\chi_j^{\epsilon}}{\prod_1^q \hat{f}^{k_j}_{j1}}
\frac{d\bar{x}^{\alpha_1}}{\bar{x}^{\alpha_1}} \wedge \cdots \wedge \frac{d\bar{x}^{\alpha_{\mu}}}{\bar{x}^{\alpha_{\mu}}}
\wedge \Phi \wedge \hat{\varphi}_2\rho,
\end{equation}
\begin{equation}\label{eq5'}
\pm \lambda_1\cdots \lambda_p \int_{\C^n_x}
\frac{\prod_1^q |\hat{f}_j|^{2\lambda_j}}{\prod_1^q \hat{f}^{k_j}_{j1}}
\frac{d\bar{x}^{\alpha_1}}{\bar{x}^{\alpha_1}} \wedge \cdots \wedge \frac{d\bar{x}^{\alpha_{\mu}}}{\bar{x}^{\alpha_{\mu}}}
\wedge \Phi \wedge \hat{\varphi}_2\rho.
\end{equation}

\begin{lemma}\label{divlemma}
Let $\mathcal{K}=\{i;\, x_i \, \big| \, x^{\alpha_j}, \, \textrm{some} \,\, p+1\leq j \leq q\}$.
For any fixed $r\in \mathbb{N}$, one can replace $\Phi$ in \eqref{eq5} and \eqref{eq5'} by
\begin{equation*}
\Phi':=\Phi -
\sum_{J\subseteq \mathcal{K}}(-1)^{|J|}\sum_{k_1,\dots,k_{|J|} = 0}^{r+1}
\left.\frac{\partial^{|k|}\Phi}{\partial x_J^k}\right|_{x_J=0}
\frac{x_J^k}{k!}
\end{equation*}
without affecting the integrals. Moreover, for any $I\subseteq \mathcal{K}$, we have that 
$\Phi'\wedge \Lambda_{i\in I}(d\bar{x}_i/\bar{x}_i)$ is $C^r$-smooth.
\end{lemma}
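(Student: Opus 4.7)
My plan has three steps: (i) extract the algebraic consequence of the complete intersection hypothesis that makes the lemma possible, (ii) verify the $C^r$-smoothness claim by recognizing $\Phi'$ as an iterated antiholomorphic Taylor remainder, and (iii) reduce the integral-invariance to a one-variable computation in which the $\bar x_i$-polynomial part of $\Phi$ is matched off against the purely-holomorphic $x_i$-singularity coming from the $U$-factor.

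For (i), I would show that on every chart of the Hironaka resolution the supports $\supp(\alpha_1),\ldots,\supp(\alpha_q)$ are pairwise disjoint. Indeed, $\bigcap_j\{x^{\alpha_j}=0\}$ decomposes as the union, over all $(i_1,\ldots,i_q)\in\supp(\alpha_1)\times\cdots\times\supp(\alpha_q)$, of the coordinate subspaces $\{x_{i_1}=\cdots=x_{i_q}=0\}$; a shared index across two supports would produce a component of codimension strictly less than $\sum e_j$, contradicting complete intersection. Consequently $\supp(\alpha_j)\cap\mathcal{K}=\emptyset$ for every $j\leq p$, so the antiholomorphic residue wedges $d\bar x^{\alpha_j}/\bar x^{\alpha_j}$ with $j\leq\mu$ appearing in \eqref{eq5} and \eqref{eq5'} depend on no variable $x_i$ with $i\in\mathcal{K}$; in particular the integrand carries no $d\bar x_i$ and no $1/\bar x_i$ for such $i$.

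For (ii), I interpret the partial derivatives in the definition of $\Phi'$ antiholomorphically, which is the only interpretation consistent with the $C^r$-regularity claim. Writing $T^{i}_{r+1}$ for the order-$(r+1)$ partial Taylor operator in $\bar x_i$ about $\bar x_i=0$, an inclusion--exclusion expansion gives
\begin{equation*}
\Phi'=\prod_{i\in\mathcal{K}}\bigl(\mathrm{id}-T^{i}_{r+1}\bigr)\Phi,
\end{equation*}
with commuting factors. Each factor $(\mathrm{id}-T^{i}_{r+1})\Phi$ vanishes to order at least $r+2$ in $\bar x_i$, so $\Phi'$ vanishes to order at least $r+2$ in each $\bar x_i$, $i\in\mathcal{K}$. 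Hence for any $I\subseteq\mathcal{K}$ the quotient $\Phi'/\prod_{i\in I}\bar x_i$ is $C^{r+1}$-smooth, and $\Phi'\wedge\bigwedge_{i\in I}(d\bar x_i/\bar x_i)=\bigl(\Phi'/\prod_{i\in I}\bar x_i\bigr)\wedge\bigwedge_{i\in I}d\bar x_i$ is at least $C^{r+1}$, a fortiori $C^r$.

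For (iii), I would argue one variable at a time: it suffices to show that, for each $i\in\mathcal{K}$, replacing $\Phi$ by $(\mathrm{id}-T^{i}_{r+1})\Phi$ preserves \eqref{eq5} and \eqref{eq5'}. Fix such an $i$; by the disjointness from (i) there is a unique $j_0>p$ with $i\in\supp(\alpha_{j_0})$, and the $x_i$-singular part of the integrand is the purely-holomorphic $1/x_i^{k_{j_0}\alpha_{j_0 i}}$, regularized by $|x_i|^{2\lambda_{j_0}\alpha_{j_0 i}}$ in \eqref{eq5'} or by $\chi_{j_0}^{\epsilon}/\tilde\chi_{j_0}^{\epsilon}$ in \eqref{eq5}. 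Performing Fubini in $x_i$ first and expanding the remaining smooth factor in a joint $(x_i,\bar x_i)$-Taylor polynomial plus remainder as in Lemma~2.4 of \cite{CH}, the polar-coordinate / angular-mode bookkeeping used in the passage to \eqref{Iepsilon} shows that only those monomials $x_i^m\bar x_i^k$ can contribute for which the total $e^{i\theta_i}$-power vanishes; these are precisely the ``resonant'' modes singled out by $T^{i}_{r+1}$, so the contribution of $T^{i}_{r+1}\Phi$ matches exactly the resonant contribution of $\Phi$ and the remainder $(\mathrm{id}-T^{i}_{r+1})\Phi$ yields the same integral. The main obstacle is this angular-matching step: one has to show that $|x_i|^{2\lambda}\bar x_i^k/x_i^{a}\big|_{\lambda=0}$ annihilates the non-resonant smooth perturbations and gives the expected moment on the resonant ones, which reduces to the one-variable principal-value identities underlying Lemma~\ref{pmmultlemma}, combined with choosing $r$ large enough to push all residual $\lambda$-poles past $\lambda=0$.
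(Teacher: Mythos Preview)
Your step (i) contains a genuine error that undermines the rest of the plan. The complete intersection hypothesis is on $Z$, but the multi-indices $\alpha_j$ live on the resolution $\hat Z$, and blowing up typically destroys codimension. Concretely, take $f_1=z_1$, $f_2=z_2$ on $\C^2$ (a complete intersection) and blow up the origin: in the chart $z_1=x_1$, $z_2=x_1x_2$ one gets $\alpha_1=(1,0)$, $\alpha_2=(1,1)$, whose supports overlap. With $p=1$, $q=2$ this gives $\mathcal K=\{1,2\}$ and $\supp(\alpha_1)\subset\mathcal K$, so your conclusion that $\supp(\alpha_j)\cap\mathcal K=\emptyset$ for $j\le p$ fails. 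The codimension count you invoke concerns $\bigcap_j\{x^{\alpha_j}=0\}$ on $\hat Z$, and there is no reason that equals $\sum e_j$.

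The paper uses complete intersection in an entirely different way. It never claims anything about the $\alpha_j$ on $\hat Z$; instead it builds a smooth $(0,*)$-form $F$ on the base $Z$ whose anti-holomorphic bidegree exceeds the complex dimension of $\bigcap_{j\in L^c}\{f_j=0\}$ (this is exactly where complete intersection enters), so that $F$ pulls back to zero on that subvariety. Transporting this vanishing to $\hat Z$ and doing some algebra yields
\[
d\bar x^{\alpha_1}\wedge\cdots\wedge d\bar x^{\alpha_\mu}\wedge\left.\frac{\partial^{|k|}\Phi}{\partial x_I^k}\right|_{x_I=0}=0,\qquad I\subseteq\mathcal K,
\]
so the Taylor terms drop out of the \emph{integrand} in \eqref{eq5}, \eqref{eq5'}, not merely out of the integral. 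Note also that in the paper the operators $\partial/\partial x_I$ are holomorphic (the argument explicitly uses that the relevant decomposition is preserved by holomorphic differential operators), not antiholomorphic as you assume in (ii).

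Even granting (i), your step (iii) has a gap: at the stage of Lemma~\ref{divlemma} the cutoffs are $\chi_j^\epsilon=\chi(|x^{\tilde\alpha_j}|^2\xi_j/\epsilon_j)$ and the $\lambda$-factors are $|\hat f_j|^{2\lambda_j}=|x^{\alpha_j}h_j|^{2\lambda_j}\xi_j^{\lambda_j}$, all of which depend on the angular variable $\theta_i$ through $\xi_j$ and $h_j$. The angular-mode bookkeeping you borrow from the proof of Theorem~\ref{main} is valid only after the change of variables \eqref{varbyte} that removes these weights; that change has not been made here, so a one-variable Fubini in $x_i$ does not isolate ``resonant'' monomials.
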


We replace $\Phi$ by $\Phi'$ in \eqref{eq5} and \eqref{eq5'} and we 
write $d=d_{\mathcal{K}}+d_{\mathcal{K}^c}$, where $d_{\mathcal{K}}$ differentiates with respect to the 
variables $x_i$, $\bar{x}_i$ for $i\in \mathcal{K}$ and $d_{\mathcal{K}^c}$ differentiates with respect to the rest.
Then we can write 
$(d\bar{x}^{\alpha_1}/\bar{x}^{\alpha_1})\wedge \cdots \wedge (d\bar{x}^{\alpha_{\mu}}/\bar{x}^{\alpha_{\mu}})\wedge \Phi'$ 
as a sum of terms, which we without loss of generality can assume to be of the form 
\begin{equation*}
\frac{d_{\mathcal{K}^c}\bar{x}^{\alpha_1}}{\bar{x}^{\alpha_1}}\wedge \cdots \wedge 
\frac{d_{\mathcal{K}^c}\bar{x}^{\alpha_{\nu}}}{\bar{x}^{\alpha_{\nu}}}\wedge
\frac{d_{\mathcal{K}}\bar{x}^{\alpha_{\nu+1}}}{\bar{x}^{\alpha_{\nu+1}}}\wedge \cdots \wedge 
\frac{d_{\mathcal{K}}\bar{x}^{\alpha_{\mu}}}{\bar{x}^{\alpha_{\mu}}}\wedge \Phi'
\end{equation*}
\begin{equation*}
=\frac{d_{\mathcal{K}^c}\bar{x}^{\alpha_1}}{\bar{x}^{\alpha_1}}\wedge \cdots \wedge 
\frac{d_{\mathcal{K}^c}\bar{x}^{\alpha_{\nu}}}{\bar{x}^{\alpha_{\nu}}}\wedge
\Phi''\wedge d\bar{x}_{\mathcal{K}},
\end{equation*}
where $\Phi''$ is $C^r$-smooth and of bidegree $(0,n-\nu-|\mathcal{K}|)$ (possibly, $\Phi''=0$).
Thus, \eqref{eq5} and \eqref{eq5'} are finite sums of of integrals of the following type
\begin{equation}\label{eq6}
\int_{\C^n_x}\frac{\prod_1^p \tilde{\chi}_j^{\epsilon} \prod_{p+1}^q\chi_j^{\epsilon}}{\prod_1^q \hat{f}^{k_j}_{j1}}
\frac{d\bar{x}^{\alpha_1}}{\bar{x}^{\alpha_1}} \wedge \cdots \wedge \frac{d\bar{x}^{\alpha_{\nu}}}{\bar{x}^{\alpha_{\nu}}}
\wedge \psi \wedge d\bar{x}_{\mathcal{K}}\wedge dx,
\end{equation}
\begin{equation}\label{eq6'}
\lambda_1\cdots \lambda_p \int_{\C^n_x}
\frac{\prod_1^q |\hat{f}_j|^{2\lambda_j}}{\prod_1^q \hat{f}^{k_j}_{j1}}
\frac{d\bar{x}^{\alpha_1}}{\bar{x}^{\alpha_1}} \wedge \cdots \wedge \frac{d\bar{x}^{\alpha_{\nu}}}{\bar{x}^{\alpha_{\nu}}}
\wedge \psi \wedge d\bar{x}_{\mathcal{K}}\wedge dx,
\end{equation}
where $\psi$ is $C^r$-smooth and compactly supported.

\bigskip

We now first finish the proof of Proposition \ref{lambdapropp}. First of all, it is well known that
$\Gamma(\lambda)$ has a meromorphic continuation to $\C^q$. We have
\begin{equation*}
\frac{d\bar{x}^{\alpha_1}}{\bar{x}^{\alpha_1}} \wedge \cdots \wedge \frac{d\bar{x}^{\alpha_{\nu}}}{\bar{x}^{\alpha_{\nu}}}
\wedge d\bar{x}_{\mathcal{K}} =
\sum_{\stackrel{|I|=\nu}{I\subseteq \mathcal{K}^c}}C_I\frac{d\bar{x}_I}{\bar{x}_I}\wedge d\bar{x}_{\mathcal{K}}.
\end{equation*}
Let us assume that $I=\{1,\ldots,\nu\}\subseteq \mathcal{K}^c$ and consider the contribution to 
\eqref{eq6'} corresponding to this subset. This contribution equals
\begin{equation}\label{eq7'}
C_I\lambda_1\cdots \lambda_p \int_{\C^n_x}
\frac{|x^{\sum_1^q \lambda_j\alpha_j}|^2}{x^{\sum_1^q k_j\alpha_j}} \bigwedge_1^{\nu}\frac{d\bar{x}_j}{\bar{x}_j}\wedge
\Psi(\lambda,x)\wedge d\bar{x}_{\mathcal{K}}\wedge dx
\end{equation}
\begin{eqnarray*}
&=& \frac{C_I\prod_1^p\lambda_j}{\prod_{i=1}^{\nu}(\sum_1^q\lambda_j\alpha_{ji})}
\int_{\C^n_x} \frac{\bigwedge_{i=1}^{\nu}\debar |x_i|^{2\sum_1^q\lambda_j\alpha_{ji}}
\prod_{i=\nu+1}^n|x_i|^{2\sum_1^q\lambda_j\alpha_{ji}}}{x^{\sum_1^q k_j\alpha_j}}\wedge \\
& & \hspace{7cm}\wedge \Psi(\lambda,x)\wedge d\bar{x}_{\mathcal{K}}\wedge dx,
\end{eqnarray*}
where $\Psi(\lambda,x)=\psi(x)\prod_1^q(\xi_j^{\lambda_j}/h_j^{k_j})$.
It is well known (and not hard to prove, e.g., by integrations by parts as in \cite{MatsAB}, Lemma 2.1) that the 
{\em integral} on the right-hand side of \eqref{eq7'} has an analytic continuation in $\lambda$ to 
a neighborhood of $\cap_1^q\{\mathfrak{Re}\, \lambda_j \geq 0\}$.
(We thus choose $r$ in Lemma \ref{divlemma} large enough so that we can integrate by parts.)
If $p=0$, then the coefficient in front of 
the integral is to be interpreted as $1$ and Proposition \ref{lambdapropp} follows in this case. 
For $p>0$, we see that the poles of \eqref{eq7'}, and consequently
of $\Gamma(\lambda)$, in a neighborhood of $\cap_1^q\{\mathfrak{Re}\, \lambda_j \geq 0\}$ are along
hyperplanes
of the form $0=\sum_1^q\lambda_j\alpha_{ji}$, $1\leq i \leq \nu$. But if $j>p$ and $i\leq \nu$, then $\alpha_{ji}=0$
since $\{1,\ldots,\nu\}\subseteq \mathcal{K}^c=\{i;\, x_i \nmid x^{\alpha_j},\, \forall j=p+1,\ldots,q\}$.
Thus, the hyperplanes are of the form $0=\sum_1^p\lambda_j\alpha_{ji}$ and Proposition \ref{lambdapropp} is proved
except for the statement that at least for two $j$:s, the $\alpha_{ji}$ are non-zero. However, we see from 
\eqref{eq7'} that if for some $i$ we have $\alpha_{ji}=0$ for all $j$ but one, then the appearing $\lambda_j$ in
the denominator will be canceled by the numerator. Moreover, we may assume that 
the constant $C_I=\det (\alpha_{ji})_{1\leq i,j\leq \nu}$ is non-zero which implies that we cannot have any
$\lambda_j^2$ in the denominator. 

\bigskip

We now prove Proposition \ref{epsilonpropp}. Consider \eqref{eq6}. We have that $\alpha_1,\ldots,\alpha_{\nu}$
are linearly independent so we may assume that $A=(\alpha_{ij})_{1\leq i,j\leq \nu}$ is invertible
with inverse $B=(b_{ij})$. We make the non-holomorphic change of variables \eqref{varbyte}, where the ``$q$'' of 
\eqref{varbyte} now should be understood as $\nu$. Then we get $x^{\alpha_j}=y^{\alpha_j}\eta_j$, where 
$\eta_j>0$ and smooth and $\eta_j^2=1/\xi_j$, $j=1,\ldots,\nu$. 
Hence, $|\hat{f}_j|^2=|y^{\alpha_j}|^2$, $j=1,\ldots,\nu$.
Expressed in the $y$-coordinates we get that 
$\Lambda_1^{\nu}(d\bar{x}^{\alpha_j}/\bar{x}^{\alpha_j})\wedge \psi \wedge d\bar{x}_{\mathcal{K}}\wedge dx$ 
is a finite sum of terms of the form 
\begin{equation}\label{hack2}
\frac{d\bar{y}^{\alpha_1}}{\bar{y}^{\alpha_1}} \wedge \cdots \wedge \frac{d\bar{y}^{\alpha_{\nu'}}}{\bar{y}^{\alpha_{\nu'}}}
\wedge \bar{y}_{\mathcal{K}'}\, d\bar{y}_{\mathcal{K}''} \wedge \psi_1,
\end{equation}
where $\nu'\leq \nu$, $\psi_1$ is a $C^r$-smooth compactly supported form, and
$\mathcal{K}'$ and $\mathcal{K}''$ are disjoint sets such that $\mathcal{K}'\cup \mathcal{K}''=\mathcal{K}$.
In order to give a contribution to \eqref{eq6} we see that $\psi_1$ must contain $dy$. 
In \eqref{hack2} we write $d=d_{\mathcal{K}}+d_{\mathcal{K}^c}$, and arguing as we did
immediately after Lemma \ref{divlemma}, \eqref{hack2} is a finite sum of terms of the form
\begin{equation*}
\frac{d\bar{y}^{\alpha_1}}{\bar{y}^{\alpha_1}} \wedge \cdots \wedge \frac{d\bar{y}^{\alpha_{\nu''}}}{\bar{y}^{\alpha_{\nu''}}}
\wedge \psi_2 \wedge d\bar{y}_{\mathcal{K}}\wedge dy,
\end{equation*}
where $\nu''\leq \nu$ and $\psi_2$ is $C^r$-smooth and compactly supported.
With abuse of notation we thus have that \eqref{eq6} is a finite sum of integrals of the form
\begin{equation}\label{eq7}
\int_{\C^n_x}\frac{\prod_1^p \tilde{\chi}_j^{\epsilon} \prod_{p+1}^q\chi_j^{\epsilon}}{\prod_1^q \hat{f}^{k_j}_{j1}}
\frac{d\bar{y}^{\alpha_1}}{\bar{y}^{\alpha_1}} \wedge \cdots \wedge \frac{d\bar{y}^{\alpha_{\nu}}}{\bar{y}^{\alpha_{\nu}}}
\wedge \psi \wedge d\bar{y}_{\mathcal{K}}\wedge dy
\end{equation}
\begin{equation*}
=\int_{\C^n_x}\frac{\bigwedge_1^{\nu}d\chi_j^{\epsilon}
\prod_{\nu+1}^p \tilde{\chi}_j^{\epsilon} \prod_{p+1}^q\chi_j^{\epsilon}}{y^{\sum_1^q k_j\alpha_j}}
\wedge \Psi \wedge d\bar{y}_{\mathcal{K}}\wedge dy,
\end{equation*}
where $\Psi$ is a $C^r$-smooth compactly supported $(n-|\mathcal{K}|-\nu)$-form; the equality follows since
$\chi_j^{\epsilon}=\chi_j(|y^{\alpha_j}|^2/\epsilon_j)$, $j=1,\ldots,\nu$. Now, \eqref{eq7} is essentially equal
to equation (24) of \cite{JebHs} and the proof of Proposition \ref{epsilonpropp} is concluded as in the 
proof of Proposition 8 in \cite{JebHs}.
\end{proof}

\bigskip

\begin{proof}[Proof of Lemma \ref{divlemma}]
The proof is similar to the proof of Lemma 9 in \cite{JebHs} but some modifications have to be done.
First, it is easy to check by induction over $|\mathcal{K}|$ that 
$\Phi'\wedge \Lambda_{i\in I}(d\bar{x}_i/\bar{x}_i)$ is $C^r$-smooth for any $I\subseteq \mathcal{K}$; for
$|\mathcal{K}|=1$ this is just Taylor's formula for forms. It thus suffices to show that
\begin{equation*}
d\bar{x}^{\alpha_1}\wedge \cdots \wedge d\bar{x}^{\alpha_{\mu}}\wedge \left.\frac{\partial^{|k|} \Phi}{\partial x_I^k}\right|_{x_I=0}=0,\quad
\forall I\subseteq \mathcal{K}, \, k=(k_{i_1},\ldots,k_{i_{|I|}}).
\end{equation*}
To show this, fix an $I\subseteq \mathcal{K}$ and let $L=\{j; \, x_i \nmid x^{\alpha_j}\,\, \forall i\in I\}$. 
Say for simplicity that 
\begin{equation*}
L=\{1,\ldots,\mu',\mu+1,\ldots,m',m+1,\ldots,p',p+1,\ldots,q'\},
\end{equation*}
where $\mu'\leq \mu$, $m'\leq m$, $p'\leq p$, and $q'<q$. The fact that $q'<q$ follows from the definitions
of $\mathcal{K}$, $I$, and $L$.

Consider, on the base variety $Z$, the smooth form
\begin{equation*}
F=\bigwedge_1^{\mu'}d\bar{f}_{j1} \bigwedge_{\mu+1}^{m'}d\bar{f}_{j1}
\bigwedge_{m+1}^{p'}(|f_{j1}|^2\debar |f_j|^2-\debar |f_{j1}|^2 |f_j|^2)
\bigwedge_{j\in L}|f_j|^{2k_j}u^j_{k_j} \wedge \varphi_1.
\end{equation*}
It has bidegree $(0,n-\sum_{j\in L^c}k_j +q-q')$ so $F$ 
has a vanishing pullback to $\cap_{j\in L^c}\{f_j=0\}$ since this set has dimension 
$n-\sum_{j\in L^c}e_j < n-\sum_{j\in L^c}k_j +q-q'$ by our assumption about complete intersection.
Thus, $\hat{F}$ has a vanishing pullback to $\{x_I=0\}\subseteq \cap_{j\in L^c}\{\hat{f}_j=0\}$.
In fact, this argument shows that 
\begin{equation}\label{Feq}
\hat{F}=\sum\phi_j,
\end{equation}
where the $\phi_j$ are smooth linearly independent forms such that each $\phi_j$ is divisible by 
$\bar{x}_i$ or $d\bar{x}_i$ for some $i\in I$. (It is the pull-back to $\{x_I=0\}$ of the anti-holomorphic
differentials of $\hat{F}$ that vanishes.)
For the rest of the proof we let $\sum \phi_j$ 
denote such expressions and we note that they are invariant under holomorphic differential operators.
Computing $\hat{F}$ we get
\begin{equation*}
\hat{F}=\prod_{m+1}^{p'} |\hat{f}_{j1}|^4\prod_{j\in L}\frac{|\hat{f}_j|^{2k_j}}{\hat{f}_{j1}^{k_j}}
\bigwedge_1^{\mu'}d\bar{x}^{\alpha_j} \bigwedge_{\mu+1}^{m'}d(\bar{x}^{\alpha_j}\bar{h}_j)
\bigwedge_{m+1}^{p'}\debar \xi_j
\bigwedge_{j\in L}v^j \wedge \hat{\varphi}_1.
\end{equation*}
The ``coefficient'' $\prod_{m+1}^{p'} |\hat{f}_{j1}|^4\prod_{j\in L}(|\hat{f}_j|^{2k_j}/\hat{f}_{j1}^{k_j})$
does not contain any $\bar{x}_i$ with $i\in I$ so we may divide \eqref{Feq} by it 
(recall that the $\phi_j$ are linearly independent) and we obtain
\begin{eqnarray*}
\sum\phi_j &=&
\bigwedge_1^{\mu'}d\bar{x}^{\alpha_j} \bigwedge_{\mu+1}^{m'}d(\bar{x}^{\alpha_j}\bar{h}_j)
\bigwedge_{m+1}^{p'}\debar \xi_j
\bigwedge_{j\in L}v^j \wedge \hat{\varphi}_1 \\
&=& \prod_{\mu+1}^{m'}\bar{x}^{\alpha_j} \bigwedge_1^{\mu'}d\bar{x}^{\alpha_j} \bigwedge_{\mu+1}^{m'}d\bar{h}_j
\bigwedge_{m+1}^{p'}\debar \xi_j
\bigwedge_{j\in L}v^j \wedge \hat{\varphi}_1 \\
& & + \bigwedge_1^{\mu'}d\bar{x}^{\alpha_j} \wedge \sum_{\mu+1}^{m'}d\bar{x}^{\alpha_j}\wedge \tau_j
\end{eqnarray*}
for some $\tau_j$. We multiply this equality with 
\begin{equation*}
\bigwedge_{m'+1}^md\bar{h}_j \bigwedge_{p'+1}^p \debar \xi_j \bigwedge_{j\in L^c}v^j/\left(\prod_{\mu+1}^m\bar{h}_j
\prod_{m+1}^p\xi_j\right)
\end{equation*}
and get
\begin{equation*}
\prod_{\mu+1}^{m'}\bar{x}^{\alpha_j} \bigwedge_1^{\mu'}d\bar{x}^{\alpha_j}\wedge \Phi+
\bigwedge_1^{\mu'}d\bar{x}^{\alpha_j} \wedge \sum_{\mu+1}^{m'}d\bar{x}^{\alpha_j}\wedge \tau_j
=\sum\phi_j
\end{equation*}
for some new $\tau_j$. We apply the operator $\partial^{|k|}/\partial x_I^k$ to this equality and then we
pull back to $\{x_I=0\}$, which makes the right-hand side vanish;
(we construe however the result in $\C^n_x$). Finally, taking the exterior product with
$\Lambda_{\mu'+1}^{\mu}d\bar{x}^{\alpha_j}$, which will make each term in under the summation sign on the left-hand
side vanish, we arrive at
\begin{equation*}
\prod_{\mu+1}^{m'}\bar{x}^{\alpha_j} \bigwedge_1^{\mu}d\bar{x}^{\alpha_j}\wedge 
\frac{\partial^{|k|}\Phi}{\partial x_I^k}\big|_{x_I=0}=0
\end{equation*} 
and we are done.
\end{proof}

\section*{Acknowledgments}

We would like to thank the anonymous referee for valuable comments regarding the presentation of the article.

\end{document}